\newtheorem{theorem}{Theorem}
\newtheorem{proposition}[theorem]{Proposition}
\newtheorem{corollary}[theorem]{Corollary}
\theoremstyle{definition}
\newtheorem{definition}[theorem]{Definition}
\numberwithin{equation}{section} \numberwithin{theorem}{section}
\theoremstyle{remark}
\newtheorem{remark}[theorem]{Remark}
\def\g{{\mathfrak{g}}}      
\def\l{\lambda}
\newcommand\lbb[1]{\label{#1}}
\def\tt{\otimes}                               
\def\<{\langle}
\def\>{\rangle}
\def\d{\partial}
\def \<{\langle}
\def \>{\rangle}
\def \p{\partial}
\def \c{{*c}}
\def \lbb{\label}
\newcommand{\CC}{\mathbb{C}}
\newcommand{\fg}{\mathfrak{g}}
\def\be{\beta}
\def\de{\delta}
\def\De{\Delta}
\def\la{\lambda}
\def\g{{\mathfrak{g}}}      
\def\noi{\noindent}
\def \la{\lambda}
\def \p{\partial}
\def \pa{\partial}
\def \cp{\mathbb C[\partial]}
\def \l0{L_{\geq 0}}
\def \1{{(1)}}
\def \2{{(2)}}
\def \l{{_{\substack{, \\ \la}}}}
\newcommand{\loc}{\operatorname{Loc}}
\newcommand{\Cur}{\operatorname{Cur}}
\newcommand{\fc}{\mathfrak{c}}
\newcommand{\dotla}{%
    \mathbin{\underset{\raisebox{0.1ex}{\scriptsize {$\lambda$}}}{{\cdot}}}%
}
\newcommand{\conformaldual}{^{\ast c}}
\newcommand{\wittbracket}[3]{[{#1}_{\, #2}\, {#3}]}
\begin{document}

\title[Differential Lie  Coalgebras and Lie Conformal Algebras]
      {Differential Lie  Coalgebras and Lie Conformal Algebras}

\author [Carina Boyallian and Jos\'e I. Liberati]{Carina Boyallian$^*$ and Jos\'e I. Liberati$^{**}$}
\thanks {\textit{$^{*}$Famaf - Universidad Nacional de Córdoba, and  Ciem  \mbox{(CONICET)}, Medina Allende y
Haya de la Torre, \indent  Ciudad Universitaria, (5000) C\'ordoba
 Argentina. Email:
cboyallian@unc.edu.ar}\newline
\indent \textit{$^{**}$Ciem - CONICET, Medina Allende y
Haya de la Torre, Ciudad Universitaria, (5000) C\'ordoba \ - \indent
Argentina. Email:
joseliberati@gmail.com}}
\address{\textit{Famaf - Ciem (CONICET), Medina Allende y
Haya de la Torre, Ciudad Universitaria, (5000) C\'ordoba - \indent
 Argentina. e-mail:
cboyallian@unc.edu.ar}\newline
\indent {\textit{Ciem - CONICET, Medina Allende y
Haya de la Torre, Ciudad Universitaria, (5000) C\'ordoba -
Argentina.  \indent e-mail: joseliberati@gmail.com}}}

\date{October 30, 2025}

\subjclass[2020]{{17B65, 17B62}}
\keywords{{Lie conformal algebra, differential Lie coalgebra, upper zero functor, locally finite.}}

\maketitle

\begin{abstract}
   We define a functor from the  category of 
Lie conformal algebras to the category of  differential Lie coalgebras, 
which associates  to any Lie conformal algebra $L$  a differential Lie coalgebra $L^{\,0}$, defined as the maximal good $\cp$-submodule of the conformal dual $L^{\c}$. We show that the contravariant functor ${ }^{0}$ is right adjoint   to the contravariant functor ${ }^{\c}$. We define the Loc   functor from the  category of 
differential Lie coalgebras to the category of locally finite  differential Lie coalgebras, associating to any   differential Lie coalgebra $M$ the differential Lie coalgebra Loc$(M)$, defined as the largest locally finite differential Lie subcoalgebra of $M$. We prove that for any Lie conformal algebra $L$  that is free as a
$\cp$-module, Loc$(L^{0})$ is the set of conformal linear maps on $L$ whose kernel contains an ideal of $L$    of  cofinite rank. In general, $L^{0}$ will not be locally finite, so   $\operatorname{Loc}\left(L^{0}\right) \varsubsetneqq L^{0}$.  We present an example illustrating this.
\end{abstract}


\section{Introduction}\lbb{intro}

In his foundational work on Lie coalgebras, Michaelis \cite{M} introduces several pivotal functors that bridge the categories of Lie algebras and Lie coalgebras, enriching their categorical duality. His exposition focuses on the ``upper zero'' functor, its adjoint structure, and the locally finite part functor, each of which plays a critical role in the interplay between Lie algebras and their coalgebra counterparts.

The {\it upper zero functor} 
 is the  contravariant functor  \((-)^0: \mathrm{LieAlg} \to \mathrm{LieCoalg}\), which constructs the maximal Lie coalgebra \(L^0\) dual to a Lie algebra \(L\). For a Lie algebra \(L\) with bracket \(\mu: L \otimes L \to L\), the dual map \(\mu^*: L^* \to (L \otimes L)^*\) is restricted to ``good'' subspaces \(V \subset L^*\). A subspace \(V\) is \textit{good} if \(\mu^*(V) \subseteq \iota(V \otimes V)\), where \(\iota: V \otimes V \hookrightarrow (L \otimes L)^*\) is the canonical injection. The Lie coalgebra \(L^0\) is defined as the sum of all such good subspaces, equipping it with a comultiplication \(\delta_V\) dualizing \(\mu\).  

Crucially, \((-)^0\) is \textit{right adjoint} to the dualization functor \((-)^*: \mathrm{LieCoalg} \to \mathrm{LieAlg}\), yielding a natural bijection
\[
\operatorname{Hom}_{\mathrm{LieAlg}}(L, M^*) \cong \operatorname{Hom}_{\mathrm{LieCoalg}}(M, L^0)
\]
for any Lie algebra \(L\) and Lie coalgebra \(M\). In the finite-dimensional case, this adjunction sharpens to an \textit{anti-equivalence} between finite-dimensional Lie algebras and coalgebras, with \(L^0 = L^*\).

Michaelis further introduces the  {\it locally finite part functor}  \(\operatorname{Loc}: \mathrm{LieCoalg} \to \mathrm{LieCoalg}_{\text{lf}}\), where \(\mathrm{LieCoalg}_{\text{lf}}\) denotes locally finite Lie coalgebras. For a Lie coalgebra \(M\), \(\operatorname{Loc}(M)\) is the sum of all finite-dimensional subcoalgebras of \(M\), forming the largest locally finite subcoalgebra.  Notably, \(\operatorname{Loc}(L^0)\) coincides with the subspace
\[
\{f \in L^* \mid \ker f \text{ contains a cofinite ideal of } L\},
\]
linking the upper zero construction to classical coalgebraic duality. 
 These results provide essential tools for dualizing Lie-theoretic constructions and underscore the rich categorical duality inherent in Lie algebras and coalgebras.

In \cite{L}, the notions of conformal Lie coalgebra and conformal Lie bialgebra were introduced by one of the authors. The notion of conformal Lie coalgebra is referred to as a differential Lie coalgebra in this work. Using the results in \cite{L}, we prove in \cite{BK} a conformal analog of the anti-equivalence correspondence between  the category of finite free  differential Lie (super)coalgebras and  the category of finite free  Lie conformal (super)algebras. Since simple finite Lie conformal (super)algebras correspond to simple finite differential Lie (super)coalgebras, the classification of finite simple Lie conformal algebras (resp. superalgebras) obtained by D'Andrea and Kac \cite{DK} (resp. Fattori and Kac \cite{FK}) yields the classification of simple finite-rank differential Lie (super)coalgebras. An explicit description of the latter is also provided in \cite{BK}.

 Since the correspondence of categories also holds for Jordan conformal superalgebras, in \cite{BK} we  obtained
the classification of simple finite-rank  differential Jordan supercoalgebras, together with their explicit description.
 
 Building upon the results in \cite{BK}, in this work we establish the conformal version of Michaelis's results. Specifically, in Section \ref{0}, we define the upper zero   functor from the  category of 
Lie conformal algebras to the category of  differential Lie coalgebras
where, for any Lie conformal algebra $L$, we  associate  a differential Lie coalgebra $L^{\,0}$ defined as the maximal good $\cp$-submodule of the conformal dual $L^{\c}$. We prove that the contravariant functor ${ }^{0}$ is right adjoint   to the contravariant functor ${ }^{\c}$. 

In Section \ref{Loc}, we define the Loc   functor from the  category of 
differential Lie coalgebras to the category of locally finite  differential Lie coalgebras, 
where, to any differential Lie coalgebra $M$, we  associate  a differential Lie coalgebra Loc$(M)$, defined as the largest locally finite differential Lie subcoalgebra of $M$. We prove that for any Lie conformal algebra $L$  that is free as a
$\cp$-module, Loc$(L^{0})$ is the set of conformal linear maps on $L$ whose kernel contains an ideal of $L$    of  cofinite rank. 

In general, $L^{0}$ will not be locally finite, so in general $\operatorname{Loc}\left(L^{0}\right) \varsubsetneqq L^{0}$.  In Section \ref{Example}, we present an   example of a  Lie conformal algebra $L$ for which $0\neq \operatorname{Loc}\left(L^{0}\right) \varsubsetneqq L^{0}$. We also present an example of a Virasoro\,-type Lie conformal algebra $L$, where $L^0$ is related to the conformal version of the notion of recursive sequences.

\vskip .2cm

\section{Lie conformal algebras and differential Lie coalgebras}\lbb{Lie}

In this section, we present the definitions of Lie conformal algebras and of differential Lie coalgebras.

\begin{definition} A {\it   Lie conformal  algebra} $L$ is  a left
  $\cp$-module endowed with a $\CC$-linear map,
\begin{displaymath}
L\otimes L  \longrightarrow \CC[\la]\otimes L, \qquad a\otimes b
\mapsto [a_\la b]
\end{displaymath}
called the $\la$-bracket, and that satisfies the following axioms for 
$a,\, b,\, c\in L$,

\

\noindent Conformal sesquilinearity: $ \qquad  [\pa a_\la b]=-\la
[a_\la b],\qquad [a_\la \pa b]=(\la+\pa) [a_\la b]$.

\vskip .3cm

\noindent Skew-symmetry: $\ \qquad\qquad\qquad [a_\la
b]=- [b_{-\la-\pa} \ a]$.

\vskip .3cm

\noindent Jacobi identity: $\quad\qquad\qquad\qquad [a_\la [b_\mu
c]]=[[a_\la b]_{\la+\mu} c] +  [b_\mu [a_\la c]]. $

\vskip .5cm

\end{definition}

A Lie conformal  algebra is called {\it finite} if it has finite
rank as a $\CC[\pa]$-module. The notions of homomorphism, ideal,
and subalgebras of a Lie conformal  algebra are defined in the
 standard way. A Lie conformal  algebra $L$ is {\it simple} if $[L_\la
L]\neq 0$ and it contains no ideals except for zero and itself.

We define the
 {\it conformal dual} of a $\cp$-module $U$ as
\begin{equation}\label{eq:dual}
U^{\c}=\{f:U\to \CC[\la]\ | \ \CC\hbox{-linear and } f_{\la}(\pa
b)= \la f_{\la}(b)\}.
\end{equation}
It is a $\cp$-module with action given by
\begin{equation}\label{eq:p}
(\pa f)_{\la}:= -\la f_{\la}.
\end{equation}
Given a homomorphism of $\cp$-modules $T:M\to N$, we define the transpose homomorphism $T^*:N^{\c} \to M^{\c}$ by
\begin{equation*}
  [\, T^*(f)\, ]_\la (m)= f_\la (\,T(m)).
\end{equation*}

\vskip .1cm

We also define the tensor
 product $U\otimes V$  of $\cp$-modules as the ordinary tensor product, with
 $\cp$-module structure $(u\in U, v\in V)$:
$$
\pa(u\otimes v)\,=\, \pa u\otimes v + u\otimes \pa v.
$$

\begin{definition} A {\it differential Lie  coalgebra} $(C,\delta)$ is a $\CC[\partial]$-module $C$ endowed with a $\CC[\partial]$-homomorphism
\begin{displaymath}
\delta:C\to C\tt C
\end{displaymath}
satisfying the following axioms:

\vskip .3cm

\noindent Co-skew-symmetry: $ \qquad\qquad\qquad  \qquad\qquad \tau \circ \de = -\de$.

\vskip .3cm

\noindent Co-Jacobi identity:
$  \qquad\qquad\qquad
(I\otimes \delta) \delta - (\tau \tt I) (I\otimes \delta) \delta=
(\delta\otimes I) \delta,
$
\vskip .3cm

\noindent
where $\tau(a\otimes b )=  b\otimes a$.
\end{definition}

\vskip .3cm

\noindent This is the standard definition of a Lie  coalgebra equipped with a compatible
$\CC[\partial]$-structure. Following Sweedler's notation, the image of an element $a$ under $\de$ will be denoted by
\[
\delta(a) = \sum a_{(1)} \otimes a_{(2)}.
\]

Suppose $C$ is a differential Lie coalgebra  and $B$ is a $\CC[\pa]$-submodule of $C$ such that $\delta(B) \subseteq B \otimes B$. Then $B$ is itself a differential Lie coalgebra with the restriction $\delta: B \to B \otimes B$, 
and  is referred to as a \textit{subcoalgebra} of $C$.
  A differential Lie  coalgebra $(C,\delta)$ is {\it simple} if $\delta\neq 0$ and it contains no subcoalgebras except for zero and itself.

A $\cp$-module map $f: A \to B$ between two differential Lie coalgebras is called a coalgebra homomorphism if it satisfies the condition
\[
\delta_B (f(a)) = (f \otimes f)(\delta_A (a))
\]
for all $a \in A$.
A differential Lie  coalgebra is called {\it finite} if it has finite
rank as a $\CC[\pa]$-module.

\

We shall need the following result, which was proved in \cite{BK}:

\begin{proposition} \label{varphi-}
  Let $V$ be a free $\cp$-module, and we define  $\chi:V\to (V^{\c})^\c$ by
\begin{equation*}
  [\,\chi(v)\,]_\mu (f):=f_{-\mu} (v),
\end{equation*}
for any $v\in V$ and $f\in V^\c$. Then $\chi$  is an injective $\cp$-module homomorphism.
\end{proposition}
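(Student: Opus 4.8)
The plan is to establish the three assertions in turn: that $\chi(v)$ genuinely lands in the double conformal dual $(V^\c)^\c$, that $\chi$ is $\cp$-linear, and finally that it is injective. Only the last step will invoke freeness; the first two are direct unwindings of the definitions.

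First I would verify well-definedness. For fixed $v$, the map $\chi(v)\colon V^\c\to\CC[\mu]$ is visibly $\CC$-linear in $f$, so it remains to confirm the defining conformal condition $[\chi(v)]_\mu(\pa f)=\mu\,[\chi(v)]_\mu(f)$. Using the $\cp$-action $(\pa f)_\lambda=-\lambda f_\lambda$ on $V^\c$ from \eqref{eq:p}, one gets $(\pa f)_{-\mu}=\mu\,f_{-\mu}$, whence $[\chi(v)]_\mu(\pa f)=(\pa f)_{-\mu}(v)=\mu\,f_{-\mu}(v)=\mu\,[\chi(v)]_\mu(f)$, so $\chi(v)\in(V^\c)^\c$. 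For $\cp$-linearity I would check $\chi(\pa v)=\pa\,\chi(v)$ by evaluating both sides on an arbitrary $f$. The left side uses that $f\in V^\c$ satisfies $f_\lambda(\pa v)=\lambda f_\lambda(v)$, giving $[\chi(\pa v)]_\mu(f)=f_{-\mu}(\pa v)=-\mu\,f_{-\mu}(v)$, while the right side uses the $\cp$-action on the double dual, giving $[\pa\,\chi(v)]_\mu(f)=-\mu\,[\chi(v)]_\mu(f)=-\mu\,f_{-\mu}(v)$. These agree, so $\chi$ is a $\cp$-module homomorphism.

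The substantive step, and the one that requires freeness, is injectivity. Observe that $\chi(v)=0$ means $f_{-\mu}(v)=0$ for all $\mu$ and all $f\in V^\c$, equivalently $f_\lambda(v)=0$ in $\CC[\lambda]$ for every $f\in V^\c$; so it suffices to show that the conformal functionals separate points of $V$. I would fix a $\cp$-basis $\{e_i\}$ of $V$ and write a nonzero element as $v=\sum_i p_i(\pa)\,e_i$ with some $p_j\neq 0$. The key observation is that iterating $f_\lambda(\pa b)=\lambda f_\lambda(b)$ yields $f_\lambda(p(\pa)b)=p(\lambda)\,f_\lambda(b)$, so any prescription of values $f_\lambda(e_i)\in\CC[\lambda]$ extends to a well-defined $f\in V^\c$. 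Choosing $f_\lambda(e_j)=1$ and $f_\lambda(e_i)=0$ for $i\neq j$ produces $f_\lambda(v)=p_j(\lambda)\neq 0$ in $\CC[\lambda]$, so $v\neq 0$ forces $\chi(v)\neq 0$. The only subtlety—the point where the main obstacle lies—is ensuring that this prescribed $f$ is genuinely well-defined on all of $V$, which is precisely what freeness guarantees: absent a basis, hidden relations $\sum_i p_i(\pa)e_i=0$ could constrain the admissible values and destroy point-separation. With $V$ free, the values on basis elements may be assigned independently, completing the proof of injectivity.
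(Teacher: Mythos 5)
Your proof is correct, and there is in fact no in-paper argument to compare it against: the paper quotes this proposition as a result proved in \cite{BK}, so your write-up supplies the missing details. Your verification of well-definedness and $\cp$-linearity is a direct unwinding of the conventions in \eqref{eq:dual} and \eqref{eq:p}, and your injectivity argument---extending arbitrarily prescribed values $f_\la(e_i)\in\CC[\la]$ on a $\cp$-basis via $f_\la(p(\pa)b)=p(\la)f_\la(b)$, then separating a nonzero $v=\sum_i p_i(\pa)e_i$ with the coordinate functional at an index where $p_j\neq 0$---is the standard and surely the intended route. Your diagnosis of where freeness enters is also consistent with the paper's own remark after Theorem \ref{H} that $f_\mu(\mathrm{Tor}\,V)=0$ for every $f\in V^\c$; more generally, without a basis one cannot freely assign values (e.g.\ for $V=\CC(\pa)$ one checks $V^\c=0$), so point separation genuinely requires the hypothesis.
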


The following results were obtained in \cite{L}, except for $(c)$ and $(d)$,   which were proved in \cite{BK} when $L$ is a finite free $\cp$-module, but the same proof holds for any $\cp$-module $L$.

\begin{proposition}\label{prop:phi} {\rm (Proposition 2.12, \cite{L})} Let $L$ be a 
  $\cp$-module. Let $\Phi: L^{*c}\otimes L^{*c}\to \CC[\mu]\otimes(L\otimes L)^{*c}$ be defined by
\begin{equation}\label{eq:0}
\left[\Phi_\mu (f\otimes g)\right]_\lambda (r\otimes r')=
f_\mu(r)\  g_{\lambda -\mu} (r').
\end{equation}
Then we have:

(a) $\Phi_\mu(\p f\otimes g) =-\mu \Phi_\mu (f\otimes g)$, and
$\Phi_\mu( f\otimes \p g) =(\p +\mu) \Phi_\mu (f\otimes g)$.

(b) $\Phi$ is a homomorphism of $\CC[\p]$-modules.

(c) $\Phi$ is injective.

(d) If $F:L\to M$ is a homomorphism of $\CC[\p]$-modules, then  $\Phi_{\mu}\circ (F^*\tt F^*)  = (F\tt F)^* \circ \Phi_{\mu}$.
\end{proposition}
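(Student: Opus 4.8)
The plan is to treat parts (a), (b), and (d) as direct verifications obtained by evaluating each side on an arbitrary elementary tensor $r\otimes r'\in L\otimes L$ at a generic $\la$, reserving the only genuine argument for the injectivity in (c). Before anything else I would confirm that the formula \eqref{eq:0} actually lands in the stated target, i.e. that $\Phi_\mu(f\otimes g)\in (L\otimes L)^{*c}$: applying it to $\partial(r\otimes r')=\partial r\otimes r'+r\otimes\partial r'$ and using $f_\mu(\partial r)=\mu f_\mu(r)$ together with $g_{\la-\mu}(\partial r')=(\la-\mu)g_{\la-\mu}(r')$ produces the factor $\mu+(\la-\mu)=\la$, which is exactly the conformal dual condition. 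Bilinearity in $f$ and $g$ then shows $\Phi$ is well defined on the tensor product.

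For (a), I would substitute the $\partial$-action $(\partial f)_\mu=-\mu f_\mu$ into \eqref{eq:0} to get the first identity, and $(\partial g)_{\la-\mu}=-(\la-\mu)g_{\la-\mu}$ for the second. The point to get right is that the operator $\partial+\mu$ acts on the target $\CC[\mu]\otimes(L\otimes L)^{*c}$ through the $(L\otimes L)^{*c}$-factor via $(\partial h)_\la=-\la h_\la$, so that $[(\partial+\mu)\Phi_\mu(f\otimes g)]_\la=(\mu-\la)[\Phi_\mu(f\otimes g)]_\la$, which matches $[\Phi_\mu(f\otimes\partial g)]_\la(r\otimes r')=(\mu-\la)f_\mu(r)g_{\la-\mu}(r')$. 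Part (b) is then immediate: applying $\Phi_\mu$ to $\partial(f\otimes g)=\partial f\otimes g+f\otimes\partial g$ and adding the two identities from (a) gives $-\mu\Phi_\mu(f\otimes g)+(\partial+\mu)\Phi_\mu(f\otimes g)=\partial\,\Phi_\mu(f\otimes g)$, which is precisely the $\CC[\partial]$-linearity statement.

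The heart of the proposition is (c). Here I would write an arbitrary element of the tensor product $L^{*c}\otimes L^{*c}$ (formed over $\CC$) as $\sum_i f_i\otimes g_i$ with the $f_i$ linearly independent over $\CC$, and assume $\Phi(\sum_i f_i\otimes g_i)=0$. Evaluating on $r\otimes r'$ at $\la$ gives $\sum_i (f_i)_\mu(r)\,(g_i)_{\la-\mu}(r')=0$ for all $\mu,\la,r,r'$. The decisive move is to treat $\mu$ and $\nu:=\la-\mu$ as independent variables: fixing a value $\nu\in\CC$ and an element $r'$, the relation becomes $\sum_i (g_i)_\nu(r')\,f_i=0$ in $L^{*c}$, a $\CC$-linear combination of the $f_i$ with scalar coefficients $(g_i)_\nu(r')\in\CC$. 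Linear independence of the $f_i$ forces $(g_i)_\nu(r')=0$ for every $i$, and since $\nu$ and $r'$ were arbitrary we conclude $g_i=0$ for all $i$, whence $\sum_i f_i\otimes g_i=0$.

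Finally, (d) is another direct evaluation: for $f,g\in M^{*c}$ and $r\otimes r'\in L\otimes L$, the definition of the transpose gives $[F^*(f)]_\mu(r)=f_\mu(F(r))$, so the left-hand side evaluates to $f_\mu(F(r))\,g_{\la-\mu}(F(r'))$, while the right-hand side evaluates $\Phi_\mu(f\otimes g)$ (on the $M$-side) on $(F\otimes F)(r\otimes r')=F(r)\otimes F(r')$, giving the same expression. I expect the only real obstacle to lie in (c), specifically in justifying that the separation into the independent variables $\mu$ and $\la-\mu$ is legitimate and that the $f_i$ must be chosen independent over $\CC$ (not merely over $\CC[\partial]$), since it is exactly $\CC$-independence that the linear-algebra step consumes.
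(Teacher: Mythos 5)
Your proposal is correct. Note that the paper itself contains no proof of Proposition~\ref{prop:phi}: parts (a) and (b) are quoted from \cite{L}, and parts (c) and (d) from \cite{BK}, where they were proved for $L$ a finite free $\cp$-module, with the remark that the same proof works for any $\cp$-module. Your verifications of (a), (b), and (d) — including the preliminary check that $\Phi_\mu(f\otimes g)$ really lies in $(L\otimes L)^{*c}$, via the factor $\mu+(\lambda-\mu)=\lambda$, and the correct reading of how $\partial+\mu$ acts on the target — are exactly the standard computations these references carry out. For (c), your argument is sound and, importantly, makes no use of finiteness or freeness: since the tensor product is taken over $\CC$, every element can be written as $\sum_i f_i\otimes g_i$ with the $f_i$ linearly independent over $\CC$; treating $\mu$ and $\nu=\lambda-\mu$ as independent variables (legitimate, as the substitution $(\mu,\lambda)\mapsto(\mu,\mu+\nu)$ is an invertible change of variables for polynomial identities) and specializing $\nu\in\CC$ and $r'$ yields a scalar relation $\sum_i (g_i)_\nu(r')\,f_i=0$, forcing $(g_i)_\nu(r')=0$; since a polynomial in $\CC[\nu]$ vanishing at every $\nu\in\CC$ is zero, $g_i=0$ for all $i$. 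You are also right that $\CC$-independence of the $f_i$, rather than $\cp$-independence, is the operative hypothesis. In effect your injectivity argument is the \cite{BK} proof freed of the finite-free hypothesis (where one could alternatively test against a dual basis), so it directly substantiates the paper's assertion that the same proof holds for an arbitrary $\cp$-module $L$.
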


\vskip .2cm

Given a differential conformal coalgebra $C$, one can  define  a structure of Lie conformal algebra on $C^\c$. 
The next result was obtained in \cite{L} for Lie conformal algebras, and part (b) was proved for a free Lie conformal algebra of finite rank. The same result holds  for any Lie conformal algebra of finite rank, since $L^\c$ is  torsion-free, $f_\la(\mathrm{Tor}\,L)=0$  for any $f\in L^\c$, and $\mathrm{Tor}\,L\subseteq \mathrm{Center}\, L$; see \cite{DK}.

\begin{theorem}
\label{prop:dual}  {\rm (Proposition 2.13, \cite{L})}
(a) Let $(C, \delta)$ be a differential Lie  coalgebra. Then $C^{*c}$ is a Lie conformal  algebra with the following bracket for $f,g\in C^\c$:
\begin{equation}\label{eq:2}
([f_\mu g])_\lambda (r)= \sum \, f_\mu(r_{(1)}) \ g_{\lambda -\mu}
(r_{(2)}) = [\Phi_\mu (f\otimes g)]_{\la} (\de(r)),
\end{equation}
where $\delta(r)=\sum \, r_{(1)}\otimes r_{(2)}$.

\vskip .2cm

\noindent (b) Let $(L, [\  _\la \  ])$ be a Lie conformal  algebra
  of finite rank, that is, $L=\bigoplus_{i=1}^n \cp a_i \oplus T$, where $T=\mathrm{Tor }\, L$. Then
$L^{*c}=\bigoplus_{i=1}^n \cp a_i^*$, where $\{a_i^*\}$ is a
$\cp$-dual basis in the sense that $(a_i^*)_\la(a_j)=\delta_{ij}$ and $(a_i^*)_\la(T)=0$,
is a differential Lie  coalgebra with the following coproduct:
\begin{equation} \label{eq:De}
\de (f)= \sum_{i,j} \, f_\mu([{a_i}_\la a_j]) \ (a_i^*\otimes a_j^*)|_{
\la=\p\otimes 1,\ \mu= -\p\otimes 1 - 1\otimes \p }
\end{equation}
More explicitly, if
\begin{displaymath}
[{a_i}_\la a_j]=\sum_k P^{ij}_k (\la, \p) a_k \, + \, r^{ij}(\la)
\end{displaymath}
where $P^{ij}_k$ are polynomials in $\la$ and $\p$, and $r^{ij}(\la)\in T[\la]$,  then the
coproduct is
\begin{displaymath}
\de(a_k^*)=\sum_{i,j} Q^{ij}_k(\p\otimes 1, 1\otimes \p)\ a_i^*\otimes
a_j^*.
\end{displaymath}
where $Q^{ij}_k(x,y):=P^{ij}_k(x, -x-y)$.
\end{theorem}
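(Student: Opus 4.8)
The plan is to treat the two parts separately, in each case matching the axioms of one structure against those of the other through the maps $\Phi$ and $\chi$. For part (a), I would first record that the bracket is the composite $[f_\mu g]=\delta^{*}(\Phi_\mu(f\otimes g))$, where $\delta^{*}\colon (C\otimes C)^\c\to C^\c$ is the transpose of $\delta$ and $\Phi$ is the map of \prref{prop:phi}. Since $\delta(r)$ is a finite sum, $([f_\mu g])_\la(r)=\sum f_\mu(r_{(1)})g_{\la-\mu}(r_{(2)})$ is polynomial in $\mu,\la$; that it lies in $C^\c$ for fixed $\mu$, i.e.\ $([f_\mu g])_\la(\pa r)=\la([f_\mu g])_\la(r)$, follows because $\delta$ is a $\cp$-homomorphism, so $\delta(\pa r)=\pa\,\delta(r)$, and the two sesquilinearity factors $\mu$ and $\la-\mu$ sum to $\la$. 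Conformal sesquilinearity of the bracket is then immediate from \prref{prop:phi}(a),(b) together with the fact that the transpose of a $\cp$-homomorphism is again a $\cp$-homomorphism: applying $\delta^{*}$ to $\Phi_\mu(\pa f\otimes g)=-\mu\,\Phi_\mu(f\otimes g)$ gives $[\pa f_\mu g]=-\mu[f_\mu g]$, and applying $\delta^{*}$ to $\Phi_\mu(f\otimes\pa g)=(\pa+\mu)\Phi_\mu(f\otimes g)$ gives $[f_\mu\pa g]=(\pa+\mu)[f_\mu g]$.

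For skew-symmetry and the Jacobi identity I would dualize co-skew-symmetry and co-Jacobi in Sweedler notation. The one point needing care is the substitution $\nu\mapsto-\mu-\pa$ applied to a $C^\c$-valued polynomial $h(\nu)=\sum_n\nu^n c_n$: since $(\pa^k c)_\la=(-\la)^k c_\la$, one computes $(h(-\mu-\pa))_\la(r)=\sum_n(\la-\mu)^n (c_n)_\la(r)=(h(\la-\mu))_\la(r)$, so after evaluation the substitution collapses to $\nu\mapsto\la-\mu$. Applying the functional $g_\nu\otimes f_{\la-\nu}$ to the co-skew-symmetry identity $\sum r_{(1)}\otimes r_{(2)}=-\sum r_{(2)}\otimes r_{(1)}$ yields $([g_\nu f])_\la(r)=-([f_{\la-\nu}g])_\la(r)$, and setting $\nu=\la-\mu$ produces exactly $[f_\mu g]=-[g_{-\mu-\pa}f]$. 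For Jacobi I would evaluate each of the three terms of $[f_\mu[g_\nu h]]=[[f_\mu g]_{\mu+\nu}h]+[g_\nu[f_\mu h]]$ at the outer variable $\la$ on $r$; each becomes the single conformal functional $f_\mu\otimes g_\nu\otimes h_{\la-\mu-\nu}$ applied, respectively, to $(I\otimes\delta)\delta(r)$, to $(\delta\otimes I)\delta(r)$, and to $(\tau\otimes I)(I\otimes\delta)\delta(r)$. Thus the Jacobi identity is the co-Jacobi identity tested against this one functional, and the three independent variables $\mu,\nu,\la$ let the bookkeeping close.

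For part (b) I would first establish $L^\c=\bigoplus_i\cp a_i^{*}$: every $f\in L^\c$ kills $T=\mathrm{Tor}\,L$, since if $p(\pa)t=0$ with $0\neq p$ then $p(\la)f_\la(t)=f_\la(p(\pa)t)=0$ in the domain $\CC[\la]$, forcing $f_\la(t)=0$; hence $f$ is determined by its values on the $a_i$, and the $a_i^{*}$ with $(a_i^{*})_\la(a_j)=\delta_{ij}$, $(a_i^{*})_\la(T)=0$ form a free $\cp$-basis. Defining $\delta$ by the stated formula, the $\cp$-homomorphism property holds because $(\pa f)_\mu=-\mu f_\mu$ and the substitution $\mu=-\pa\otimes1-1\otimes\pa$ turns the factor $-\mu$ into $\pa\otimes1+1\otimes\pa$, the derivation of $L^\c\otimes L^\c$. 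Co-skew-symmetry and co-Jacobi are then obtained by running the computations of part (a) in reverse: they are the transposes, under the pairing furnished by $\Phi$ and the double-dual map $\chi$ of \prref{varphi-}, of the skew-symmetry and Jacobi identity of $[\,{}_\la\,]$, with injectivity of $\Phi$ (\prref{prop:phi}(c)) guaranteeing that the dual identities can be read off unambiguously; the finite free verification is exactly that of \cite{BK}. The torsion is harmless: since $f_\la(T)=0$ and $T\subseteq\mathrm{Center}\,L$, the terms $r^{ij}(\la)$ drop out of $\delta$ and one reduces verbatim to the finite free case.

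The main obstacle is twofold. In part (a) it is the Jacobi--co-Jacobi correspondence: one must organize the nested brackets so that all three terms pass through the same functional $f_\mu\otimes g_\nu\otimes h_{\la-\mu-\nu}$ with the correct tensor permutation, and verify that $\nu\mapsto-\mu-\pa$ collapses to $\nu\mapsto\la-\mu$ upon evaluation. In part (b) the delicate point is showing that the transpose of the bracket genuinely lands in $L^\c\otimes L^\c$ rather than in a completion; this is precisely where finiteness of rank enters (through $\chi$ being an isomorphism and $\Phi$ injective), and its failure in infinite rank is what later forces the passage to the maximal good submodule $L^{0}$.
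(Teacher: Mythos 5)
Your proposal is correct and takes essentially the same approach as the paper's sources: the paper itself offers no proof of this theorem, citing \cite{L} for part (a) and for the free case of part (b), together with a one-sentence reduction of the general finite-rank case using $f_\lambda(\mathrm{Tor}\,L)=0$ and $\mathrm{Tor}\,L\subseteq\mathrm{Center}\,L$, which your torsion argument reproduces exactly. Your dualization of the coalgebra axioms through $\Phi$ of Proposition~\ref{prop:phi}, with the key observation that the substitution $\nu\mapsto-\mu-\partial$ collapses to $\nu\mapsto\lambda-\mu$ upon evaluation, is the same mechanism the paper deploys in Section~\ref{0} (via Remark~\ref{rem} and the injectivity of $\Phi$ and $\Psi$) when it verifies co-skew-symmetry and co-Jacobi for good submodules.
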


\

The constructions defined in Theorem \ref{prop:dual} preserve homomorphisms:

\begin{proposition} \label{map}
(a) Let $f: C \rightarrow D$ be a homomorphism of differential  Lie coalgebras. Then $f^*: D^\c \rightarrow C^\c$ is a  homomorphism of Lie conformal algebras.

\noi (b)  
  If $L$ and $M$ are finite Lie conformal algebras, and if $f: L \rightarrow M$ is a homomorphism of Lie conformal algebras, then $f^*: M^\c \rightarrow L^\c$ is a  homomorphism of differential Lie coalgebras.
\end{proposition}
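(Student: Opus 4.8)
The plan is to handle the two parts separately: part (a) reduces to a direct Sweedler-style computation, while part (b) rests on the injectivity and naturality of the map $\Phi$ from \prref{prop:phi}.

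For part (a), I would first record that $f^*$ is a $\cp$-module map, which follows from \eqref{eq:p}: $[f^*(\pa g)]_\la(c)=(\pa g)_\la(f(c))=-\la\,g_\la(f(c))=[\pa f^*(g)]_\la(c)$. Then I would check that $f^*$ preserves the $\la$-bracket of \eqref{eq:2}. Evaluating $f^*([g_\mu h])$ on $c\in C$ gives $([g_\mu h])_\la(f(c))=\sum g_\mu((f(c))_{(1)})\,h_{\la-\mu}((f(c))_{(2)})$, where the Sweedler components are those of $\delta_D(f(c))$. Since $f$ is a coalgebra homomorphism, $\delta_D(f(c))=(f\otimes f)\,\delta_C(c)=\sum f(c_{(1)})\otimes f(c_{(2)})$, so this equals $\sum g_\mu(f(c_{(1)}))\,h_{\la-\mu}(f(c_{(2)}))$, which is exactly $([f^*(g)_\mu f^*(h)])_\la(c)$ by \eqref{eq:2} and the definition of $f^*$. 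This part is purely formal and presents no difficulty.

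For part (b) the goal is the identity $\delta_{L^\c}(f^*(g))=(f^*\otimes f^*)(\delta_{M^\c}(g))$ for $g\in M^\c$. Since $\Phi$ is injective by \prref{prop:phi}(c), it suffices to compare $\Phi_\mu$ of the two sides in $\CC[\mu]\otimes(L\otimes L)^\c$. On the right-hand side, \prref{prop:phi}(d) gives $\Phi_\mu\circ(f^*\otimes f^*)=(f\otimes f)^*\circ\Phi_\mu$, so $\Phi_\mu\big((f^*\otimes f^*)\delta_{M^\c}(g)\big)=(f\otimes f)^*\big(\Phi_\mu(\delta_{M^\c}(g))\big)$. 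The key computational input I would isolate as a lemma is the duality between the coproduct of \thref{prop:dual}(b) and the bracket: for a finite Lie conformal algebra $N$, any $h\in N^\c$ and $a,b\in N$,
\[
[\Phi_\mu(\delta_{N^\c}(h))]_\la(a\otimes b)=h_\la([a_{-\mu}\,b]).
\]
This is obtained by applying \eqref{eq:De} to a $\cp$-basis $\{a_k^*\}$ and observing, via \prref{prop:phi}(a), that under $[\Phi_\mu(\,\cdot\,)]_\la(a_p\otimes a_q)$ the operators $\pa\otimes 1$ and $1\otimes\pa$ act as multiplication by $-\mu$ and by $\mu-\la$, whence $Q^{pq}_k(-\mu,\mu-\la)=P^{pq}_k(-\mu,\la)=(a_k^*)_\la([a_{p\,-\mu}a_q])$; the general $h$ follows by $\cp$-linearity. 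With this relation in hand I evaluate both sides on a basis vector $a\otimes b$: the left gives $[f^*(g)]_\la([a_{-\mu}b])=g_\la(f([a_{-\mu}b]))$, and the right gives $[\Phi_\mu(\delta_{M^\c}(g))]_\la(f(a)\otimes f(b))=g_\la([f(a)_{-\mu}f(b)])$. These agree precisely because $f$ is a Lie conformal homomorphism, so $f([a_{-\mu}b])=[f(a)_{-\mu}f(b)]$. Finally, an element in the image of $\Phi$ is detected by the polynomials $[\Phi_\mu(\,\cdot\,)]_\la(a_p\otimes a_q)$, since on the $\cp$-basis $\{a_i^*\otimes a_j^*\}$ these evaluations are the linearly independent monomials $(-\mu)^s(\mu-\la)^t$; hence agreement of all these evaluations forces $\Phi_\mu$ of the two sides to coincide, and injectivity of $\Phi$ yields the claim.

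I expect the main obstacle to be the bookkeeping in establishing the duality relation: one must track the variable substitution $Q^{ij}_k(x,y)=P^{ij}_k(x,-x-y)$ from \eqref{eq:De} together with the sign-and-shift conventions of \prref{prop:phi}(a), and must justify that evaluation on the basis vectors $a_p\otimes a_q$ alone (rather than on all of $L\otimes L$) genuinely separates elements in the image of $\Phi$. Once this lemma and the separation step are secured, the hypothesis that $f$ is a Lie conformal homomorphism enters in a single transparent line, and both parts close cleanly.
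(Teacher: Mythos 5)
Your proposal is correct. The paper states \prref{map} without an in-text proof (it is inherited from \cite{L} and \cite{BK}), so there is no line-by-line argument to compare against; but your strategy coincides exactly with the machinery the paper itself deploys in \seref{0} for the analogous functoriality statement, where showing that $F^{0}$ is a coalgebra homomorphism runs through the relation $\pi_\mu\circ F^{*}=(F\otimes F)^{*}\circ\pi_\mu$, the identity $\Phi_{-\mu}\circ\de=\pi_\mu$, and the injectivity of $\Phi$ from \prref{prop:phi}(c). The one place where you do more work than necessary is your ``key lemma'': the identity $[\Phi_\mu(\delta_{N^\c}(h))]_\la(a\otimes b)=h_\la([a_{-\mu}\, b])$ is, after the substitution $\mu\mapsto-\mu$, precisely the relation $\Phi_{-\mu}\circ\de=\pi_\mu$ already recorded in \reref{rem}, so you could simply cite it rather than re-derive it from \eqref{eq:De}; your derivation is nonetheless sound --- the check $Q^{pq}_k(-\mu,\mu-\la)=P^{pq}_k(-\mu,\la)$ is right, and your separation step is legitimate because, expanding an element of $L^\c\otimes L^\c$ in the $\cp$-basis $\{a_i^*\otimes a_j^*\}$, the evaluations $[\Phi_\mu(\cdot)]_\la(a_p\otimes a_q)$ recover the coefficient polynomials in the algebraically independent variables $-\mu$ and $\mu-\la$ (this in effect re-proves the injectivity of $\Phi$ in the finite free case). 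Two small points would round it out: finite Lie conformal algebras may have torsion, and both sides of your lemma vanish on torsion arguments since $h_\la(\mathrm{Tor}\,N)=0$ and $\mathrm{Tor}\,N\subseteq\mathrm{Center}\,N$ (as noted before \thref{prop:dual}); and the passage from basis evaluations to arbitrary $a,b\in N$ uses the matching sesquilinearity of the two sides in each argument, which holds. In part (a), besides the bracket computation (which is complete), you should also record that $f^{*}(g)$ actually lies in $C^\c$, i.e.\ $[f^{*}(g)]_\la(\pa c)=\la\,[f^{*}(g)]_\la(c)$, which follows from the $\cp$-linearity of $f$.
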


\begin{remark}\label{rem}
Let $(L, [\ \,_\mu\ \, ])$ be a Lie conformal algebra. We define $\pi_\mu : L^\c \to \mathbb{C}[\mu] \tt (L\tt L)^\c$ by
 ($f\in L^\c$ and $a,b\in L$)
\begin{equation}\label{piii}
  \left[\, \pi_\mu (f)\,\right]_\la (a \tt b)=f_\la ([a_\mu b])  \in  \mathbb{C}[\mu,\la].
\end{equation}

\vskip .2cm

\noindent If $(L, [\ \,_\mu\ \, ])$ is a finite  Lie conformal algebra, then in \cite{BK}, we proved that there exists a unique map $\de:L^\c \to L^\c \tt L^\c$ such that $\Phi_{-\mu} \circ \de =\pi_\mu$. More precisely, we showed that $\de$ defined in Theorem \ref{prop:dual}(b) satisfies this relation, and, using the fact that $\Phi_\mu$ is injective (see Proposition \ref{prop:phi}), we conclude that $\de$ is the unique map with this property.
\end{remark}

\vskip .2cm


\section{upper zero functor }\lbb{0}
  

In what follows, it will be convenient to introduce notation for the categories that will be frequently used. With morphisms defined in the obvious way, we let $\mathcal{V}, \mathcal{L}^{\scriptscriptstyle  CA}$, and $\mathcal{L}^{\scriptscriptstyle DC}$ denote, respectively, the categories of left $\cp$-modules, 
Lie conformal algebras, and differential Lie coalgebras.

Combining Theorem \ref{prop:dual} (a) and Proposition \ref{map} (a),  one gets a (contravariant) functor $\,^\c$ from $\mathcal{L}^{\scriptscriptstyle DC}$ to $\mathcal{L}^{\scriptscriptstyle CA}$. 
 It is natural to ask whether, conversely, $L^{\c}$ carries a differential Lie coalgebra structure when $L$ is a Lie conformal algebra. 
 If $L$ is of finite rank, $L^{\c}$ does carry the structure of a differential Lie coalgebra, as we saw earlier in Theorem \ref{prop:dual}(b).
  For the general case, and using the results in Remark \ref{rem}, we define a  functor that arises naturally in connection with differential Lie coalgebras. We refer to it as the \textit{upper zero functor}:
 \begin{equation*}
 \,^{0}: \mathcal{L}^{\scriptscriptstyle CA}  \rightarrow \mathcal{L}^{\scriptscriptstyle DC},
 \end{equation*}
where, to any Lie conformal algebra $L$, we can associate, in a functorial way, a differential Lie coalgebra $L^{\,0}$ (the \textit{upper zero} of $L$) as described below.
 Consider the diagram
\begin{equation}\label{diagrama}
\begin{tikzcd}
L^\c \arrow[r, "\pi_\mu"] & \mathbb{C}[\mu]\otimes (L \otimes L)^\c \\
& L^\c \otimes L^\c  \arrow[u, hook, "\Phi_{-\mu}"'] \\
V \arrow[r, dashed, "\de_V"] \arrow[uu, hook] & V \otimes V \arrow[u, hook]
\end{tikzcd}
\end{equation}

\

\noindent in which $V$ is a $\cp$-submodule of $L^{\c}$. We consider a $\cp$-submodule $V$ of $L^{\c}$ to be  "good"  if there exists a map $\de_V:V \to V \otimes V$ that makes the above diagram commute.

 \begin{definition} (a) 
  A  $\cp$-submodule  $V \subset L^{\c}$ is called \textit{good} if  $\pi_{\mu}(V) \subseteq \Phi_{-\mu}(V \otimes V)$.

\vskip .2cm

\noindent (b) For any Lie conformal algebra $L$, put
$$
L^{0}=\sum_{V \in \mathcal{G}} V
$$
where $\mathcal{G}$ denotes the set of all good $\cp$-submodules  of $L^{\c}$.
\end{definition}

 It is straightforward to verify that the sum of good $\cp$-submodules  of $L^{\c}$ is again a good $\cp$-submodule  of $L^{\c}$ (see the proposition below).
 
\begin{proposition}
$L^{0}$ is a good $\cp$-submodule  of $L^{\c}$ and hence the largest good $\cp$-submodule of $L^{\c}$.
 \end{proposition}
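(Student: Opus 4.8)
The plan is to reduce the statement to two elementary facts: that $\pi_\mu$ is $\CC$-linear, and that over the field $\CC$ the tensor product is monotone with respect to inclusions of subspaces. Notably, the injectivity of $\Phi$ from \prref{prop:phi}(c) is \emph{not} needed for this particular claim — it is only the construction of $\de_V$ and its uniqueness that requires it.

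First I would record that $L^{0}=\sum_{V\in\mathcal{G}}V$ is a $\cp$-submodule of $L^{\c}$, being a sum of $\cp$-submodules; so the content of the proposition is the single inclusion $\pi_\mu(L^{0})\subseteq \Phi_{-\mu}(L^{0}\otimes L^{0})$. To verify it, I would take an arbitrary $f\in L^{0}$ and write it as a finite sum $f=\sum_{i=1}^{n}v_i$ with $v_i\in V_i$ for suitable $V_i\in\mathcal{G}$; such a representation exists because every element of a sum of submodules already lies in a finite subsum. Since $\pi_\mu$ is $\CC$-linear — indeed a $\cp$-module homomorphism, as noted in Remark \ref{rem} — this gives $\pi_\mu(f)=\sum_{i=1}^{n}\pi_\mu(v_i)$.

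Next I would invoke goodness of each $V_i$ to get $\pi_\mu(v_i)\in\Phi_{-\mu}(V_i\otimes V_i)$. Because $V_i\subseteq L^{0}$ and tensoring $\CC$-vector spaces preserves inclusions, the image of $V_i\otimes V_i$ inside $L^{\c}\otimes L^{\c}$ is contained in the image of $L^{0}\otimes L^{0}$; applying the linear map $\Phi_{-\mu}$ then yields $\Phi_{-\mu}(V_i\otimes V_i)\subseteq\Phi_{-\mu}(L^{0}\otimes L^{0})$. Hence each $\pi_\mu(v_i)$ lies in the subspace $\Phi_{-\mu}(L^{0}\otimes L^{0})$, and so does their sum $\pi_\mu(f)$, proving $L^{0}\in\mathcal{G}$. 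Maximality is then immediate: any good submodule $V$ belongs to $\mathcal{G}$, hence $V\subseteq\sum_{W\in\mathcal{G}}W=L^{0}$, so $L^{0}$ contains every good submodule and is the largest one.

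The only step demanding a moment of care — and the one I would flag as the main (mild) obstacle — is the compatibility of the various tensor products: one must ensure that $\Phi_{-\mu}(V_i\otimes V_i)$, computed a priori through $V_i\otimes V_i\hookrightarrow L^{\c}\otimes L^{\c}$, genuinely sits inside $\Phi_{-\mu}(L^{0}\otimes L^{0})$. This is clean precisely because $\CC$ is a field, so the maps $V_i\otimes V_i\to L^{0}\otimes L^{0}\to L^{\c}\otimes L^{\c}$ are injective and their images nest compatibly; one must simply be careful not to conflate the abstract space $V_i\otimes V_i$ with its embedded image in $L^{\c}\otimes L^{\c}$.
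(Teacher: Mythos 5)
Your proof is correct and is essentially the paper's own argument: the paper proves the same inclusion via the one-line chain $\pi_\mu(\sum V)=\sum\pi_\mu(V)\subseteq\sum\Phi_{-\mu}(V\otimes V)\subseteq\Phi_{-\mu}(L^0\otimes L^0)$, of which your elementwise computation is just the expanded version, and your observation that the injectivity of $\Phi$ from Proposition~\ref{prop:phi}(c) is not needed here (only later, to define $\de_V$) also matches the paper. The care you take with the embeddings $V_i\otimes V_i\hookrightarrow L^0\otimes L^0\hookrightarrow L^{\c}\otimes L^{\c}$ is a reasonable point to make explicit, since the tensor product is over the field $\CC$, but it is the same mechanism the paper uses implicitly.
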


\begin{proof}

$$
\begin{aligned}
\pi_\mu\left(L^{0}\right) & =\pi_\mu\left(\textstyle\sum V\right)=\textstyle\sum \pi_\mu(V) \subseteq \textstyle\sum \Phi_{-\mu}(V \otimes V) \subseteq \textstyle\sum \Phi_{-\mu}\left[\left(\textstyle\sum V\right) \otimes\left(\textstyle\sum V\right)\right] \\
& =\textstyle\sum \Phi_{-\mu}\left(L^{0} \otimes L^{0}\right)=\Phi_{-\mu}\left(L^{0} \otimes L^{0}\right).
\end{aligned}
$$
Hence $L^{0}$ is a good  $\cp$-submodule of $L^{\c}$.
\end{proof}

Whenever $V$ is a good  $\cp$-submodule  of $L^{\c}$, we may define a map

$$
\delta_{V}: V \rightarrow V \otimes V
$$
by requiring that
\begin{equation}\label{phi-pi}
\Phi_{-\mu}\left[\delta_{V}(f)\right]=\pi_\mu(f) \quad \hbox{ for all } f \in V \subset L^{\c}.
\end{equation}

\vskip .3cm

\noindent This is well-defined because $\operatorname{Im} \pi_\mu \subseteq \operatorname{Im} \Phi_{-\mu}$ and $\Phi_{-\mu}$ is injective by Proposition \ref{prop:phi}(c).   The map $
\delta_{V}: V \rightarrow V \otimes V
$ so defined completes the diagram $(\ref{diagrama})$.
Notice that if we write $\delta_{V}(f)$ as $\sum  f_{(1)} \otimes f_{(2)}$, then, by (\ref{piii}),

\begin{equation}\label{g-h}
f_\la([a_\mu b])=\sum  ( f_{(1)})_{-\mu}(a) \  (f_{(2)})_{\la+\mu}(b) \quad \hbox{ for all } a, b \in L.
\end{equation}

\vskip .2cm 
\noindent 

\begin{proposition} (a) 
For any good  $\cp$-submodule  $V$ of $L^{\c}$, $\left(V, \delta_{V}\right)$ is a differential Lie coalgebra. In particular, $\left(L^{0}, \delta_{L^{0}}\right)$ is a differential Lie coalgebra.

\vskip .2cm 

\noindent (b) If  $L_{1}$ and $L_{2}$  are Lie conformal algebras, and $F: L_{1} \rightarrow L_{2}$ is a homomorphism of Lie conformal algebras, then $F^{*}: L_{2}^{\c} \rightarrow L_{1}^{\c}$ takes good  $\cp$-submodules of $L_{2}^{\c}$ to good  $\cp$-submodules  of $L_{1}^{*}$. 
Consequently, $F^{*}\!\left(L_{2}^{0}\right) \subseteq L_{1}^{0}$, so the restriction of $F^{*}$ to $L_{2}^{0}$ induces a map
$$
F^{0}: L_{2}^{0} \rightarrow L_{1}^{0}, 
$$

\vskip .1cm 

\noindent and   $F^{0}$ is a homomorphism of differential Lie coalgebras.
\end{proposition}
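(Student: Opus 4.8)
The plan is to verify every axiom by transporting it through the injective map $\Phi$ and its iterates, so that each coalgebra identity for $\delta_V$ becomes the image under the evaluation pairing of the corresponding Lie conformal algebra axiom on $L$. Throughout I use the defining relation (\ref{phi-pi}), namely $\Phi_{-\mu}\circ\delta_V=\pi_\mu$ on $V$, together with the injectivity of $\Phi_{-\mu}$ from \prref{prop:phi}(c). To begin with (a), I first check that $\delta_V$ is a $\cp$-homomorphism: since $(\partial f)_\la=-\la f_\la$ by (\ref{eq:p}), relation (\ref{piii}) gives $[\pi_\mu(\partial f)]_\la(a\otimes b)=-\la\,f_\la([a_\mu b])=[\partial\,\pi_\mu(f)]_\la(a\otimes b)$, so $\pi_\mu$ is a $\cp$-homomorphism; as $\Phi_{-\mu}$ is one too (\prref{prop:phi}(b)) and is injective, the chain $\Phi_{-\mu}(\delta_V(\partial f))=\pi_\mu(\partial f)=\partial\,\pi_\mu(f)=\Phi_{-\mu}(\partial\,\delta_V(f))$ forces $\delta_V(\partial f)=\partial\,\delta_V(f)$.

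For co-skew-symmetry I would write $\delta_V(f)=\sum f_{(1)}\otimes f_{(2)}$ and, using formula (\ref{eq:0}) for $\Phi$, compute $[\Phi_{-\mu}(\tau\,\delta_V(f))]_\la(a\otimes b)=\sum(f_{(2)})_{-\mu}(a)\,(f_{(1)})_{\la+\mu}(b)$. Comparing with (\ref{g-h}) after swapping $a$ and $b$ and specializing the bracket parameter to $-\la-\mu$, this equals $f_\la([b_{-\la-\mu}a])$. On the other hand, applying $f_\la$ to the skew-symmetry relation $[a_\mu b]=-[b_{-\mu-\partial}a]$ and replacing each $\partial$ by $\la$ via $f_\la(\partial x)=\la f_\la(x)$ gives $f_\la([a_\mu b])=-f_\la([b_{-\la-\mu}a])$. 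Hence $\Phi_{-\mu}(\tau\,\delta_V(f))=-\pi_\mu(f)=-\Phi_{-\mu}(\delta_V(f))$, and injectivity of $\Phi_{-\mu}$ yields $\tau\circ\delta_V=-\delta_V$.

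The co-Jacobi identity is the main obstacle. The idea is to produce a single injective map $\Psi:L^\c\otimes L^\c\otimes L^\c\to\CC[\mu,\nu]\otimes(L\otimes L\otimes L)^\c$ by iterating $\Phi$, explicitly $f\otimes g\otimes h\mapsto\big((a\otimes b\otimes c)\mapsto f_\mu(a)\,g_\nu(b)\,h_{\la-\mu-\nu}(c)\big)$; since all tensor products are taken over $\CC$ and tensoring over a field is exact, each factor $\Phi\otimes I$ and $I\otimes\Phi$ stays injective, so $\Psi$ is injective. I would then apply $\Psi$ to the two sides of the co-Jacobi identity, expand each of the terms $(I\otimes\delta_V)\delta_V$, $(\tau\otimes I)(I\otimes\delta_V)\delta_V$ and $(\delta_V\otimes I)\delta_V$ by repeated use of (\ref{g-h}), and recognize the outcome as $f_\la$ applied to the three terms of the Jacobi identity under the appropriate substitution of $\la,\mu,\nu$. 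The delicate point is precisely the bookkeeping of these parameter substitutions and checking that $\Psi$ matches the three coalgebra terms with the three bracket terms; this is the computation already carried out in \cite{BK} in the finite-rank case, and it transfers verbatim because \prref{prop:phi} is valid for an arbitrary $\cp$-module $L$. Injectivity of $\Psi$ then delivers the co-Jacobi identity, completing (a).

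For (b) I would first establish two intertwining relations. \prref{prop:phi}(d) gives $\Phi_\mu\circ(F^*\otimes F^*)=(F\otimes F)^*\circ\Phi_\mu$, and a direct computation using that $F$ preserves the $\la$-bracket gives $[\pi_\mu^{L_1}(F^*f)]_\la(a\otimes b)=f_\la(F[a_\mu b])=f_\la([F(a)_\mu F(b)])$, that is, $\pi_\mu^{L_1}\circ F^*=(F\otimes F)^*\circ\pi_\mu^{L_2}$. Since $F^*$ is a $\cp$-module homomorphism (by its defining formula), the image $F^*V$ of a good submodule $V\subseteq L_2^\c$ is again a $\cp$-submodule, and these two relations give $\pi_\mu^{L_1}(F^*V)=(F\otimes F)^*\pi_\mu^{L_2}(V)\subseteq(F\otimes F)^*\Phi_{-\mu}^{L_2}(V\otimes V)=\Phi_{-\mu}^{L_1}(F^*V\otimes F^*V)$, so $F^*V$ is good. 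Applying this to every good submodule and summing shows $F^*(L_2^0)\subseteq L_1^0$, whence $F^0:=F^*|_{L_2^0}$ is a well-defined map $L_2^0\to L_1^0$. Finally, to see that $F^0$ is a coalgebra homomorphism I would apply the injective $\Phi_{-\mu}^{L_1}$ to the desired identity $\delta_{L_1^0}(F^0 f)=(F^0\otimes F^0)\delta_{L_2^0}(f)$: the left side becomes $\pi_\mu^{L_1}(F^*f)=(F\otimes F)^*\pi_\mu^{L_2}(f)$, while the right side becomes $\Phi_{-\mu}^{L_1}(F^*\otimes F^*)\delta_{L_2^0}(f)=(F\otimes F)^*\Phi_{-\mu}^{L_2}\delta_{L_2^0}(f)=(F\otimes F)^*\pi_\mu^{L_2}(f)$, so the two sides agree and injectivity of $\Phi_{-\mu}^{L_1}$ finishes the proof.
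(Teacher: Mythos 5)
Your proposal is correct and follows the same overall strategy as the paper: define $\delta_V$ through the relation $\Phi_{-\mu}\circ\delta_V=\pi_\mu$, transport each coalgebra axiom through the injective map $\Phi$ (and its triple analogue $\Psi$), and, for (b), use the intertwining relations $\pi^{L_1}_\mu\circ F^*=(F\otimes F)^*\circ\pi^{L_2}_\mu$ and \prref{prop:phi}(d). Two of your justifications genuinely differ from the paper's, both gaining in self-containedness: the paper proves injectivity of $\Psi$ only by asserting that ``the same arguments'' as for \prref{prop:phi}(c) in \cite{BK} apply, whereas you obtain it structurally by factoring $\Psi$ as a composition of maps of the form $\Phi\otimes I$ (injective because tensoring over $\CC$ is exact) with an instance of $\Phi$ for the pair of modules $L\otimes L$ and $L$, followed by the invertible change of variables $\mu'=\mu+\nu$ --- your normalization with third parameter $\la-\mu-\nu$ is equivalent to the paper's $\Psi_{x,y,z}$ specialized at $\Psi_{-x,-y,\la+x+y}$; and you verify that $F^0$ is a coalgebra map coordinate-free, hitting both $\delta_{L_1^0}\circ F^0$ and $(F^0\otimes F^0)\circ\delta_{L_2^0}$ with $\Phi^{L_1}_{-\mu}$ and invoking \prref{prop:phi}(d) a second time, where the paper instead evaluates pointwise on $a\otimes b$ via (\ref{g-h}). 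The one place you stop short of the paper is the co-Jacobi identity: you correctly describe the plan (expand $f_\la([a_x[b_y c]]-[b_y[a_x c]]-[[a_x b]_{x+y}c])=0$ by repeated use of (\ref{g-h}) and match the three resulting terms, under $\Psi$, with $(I\otimes\delta)\delta-(\tau\otimes I)(I\otimes\delta)\delta-(\delta\otimes I)\delta$), but you defer the parameter bookkeeping to the finite-rank computation in \cite{BK}; the paper carries this computation out explicitly in display (\ref{J}), and since your parameter scheme is exactly the one used there, this deferral is a matter of completeness rather than a gap in the argument.
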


\begin{proof} (a) Observe that $\de_V$ is a $\cp$-homomorphism, since
\begin{equation*}
\Phi_{-\mu}\left[\delta_{V}(\p f)\right]=\pi_\mu(\p f) =\p \pi_\mu(f)= \p \Phi_{-\mu}\left[\delta_{V}(f)\right]=\Phi_{-\mu}\left[\p \delta_{V}(f)\right]
\end{equation*}
and $\Phi_\mu$ is injective. 

Now, using skew-symmetry and (\ref{g-h}), we have
\vskip -.2cm 
\begin{align*}
-\left[\Phi_{-\mu}\left[\delta_{V}(f)\right]\right]_\la (a\tt b) & =-\left[\pi_\mu(f)\right]_\la (a\tt b) =-f_\la ([a_\mu b])=f_\la([b_{-\mu-\p} \, a])=f_\la([b_{-\mu-\la} \, a]) \\
    & = \sum  (f_{(1)})_{\mu+\la}(b) \  (f_{(2)})_{-\mu}(a)
      = \left[\Phi_{-\mu}\left[\sum f_{(2)} \otimes f_{(1)}\right]\right]_\la (a\tt b) \\
    & = \left[\Phi_{-\mu}\left[\tau(\delta_{V}(f))\right]\right]_\la (a\tt b),
\end{align*}
\vskip .2cm 
\noindent 
proving the co-skew-symmetry of $\de_V$ using the injectivity of  $\Phi_\mu$.
\vskip .2cm 
\noindent 

Let $\Psi: V^{*c}\otimes V^{*c}\otimes V^{*c}\to \CC[x,y,z]\otimes(V\otimes V\otimes V)^{*c}$ be the map defined by
\vskip .3cm 
\noindent 
\begin{equation}\label{eq:0}
\left[\Psi_{x,y,z}(f\otimes g\tt h)\right]  (a\otimes b\tt c)=
f_x (a)\  g_{y} (b)\ h_z (c),
\end{equation}
\vskip .2cm 
\noindent 
for all $f,g,h\in V^\c, a,b,c\in V$. 
In order to prove the co-Jacobi identity, we apply (\ref{g-h}) several times (for all $f\in V^\c, a,b,c\in V$):
\vskip -.2cm 
\begin{align}\label{J}
  0 & = f_\la( 
        [a_x [b_y c]]-[b_y [a_x c]]-[[a_x b]_{x+y} c] ) 
        \nonumber\\
   = & \sum  ( f_{(1)})_{-x}(a) \  (f_{(2)})_{\la+x}([b_y c])
      - \sum  ( f_{(1)})_{-y}(b) \  (f_{(2)})_{\la+y}([a_x c])
      \nonumber\\
   & \qquad  - \sum  ( f_{(1)})_{-x-y}([a_x b]) \  (f_{(2)})_{\la+x+y}(c) 
   \nonumber\\
   = &\sum  ( f_{(1)})_{-x}(a) \ (f_{(2)_{(1)}})_{-y}(b) \  (f_{(2)_{(2)}})_{\la+x+y}(c)
      - \sum ( f_{(1)})_{-y}(b) \ (f_{(2)_{(1)}})_{-x}(a) \  (f_{(2)_{(2)}})_{\la+x+y}(c)
      \nonumber\\
   & \qquad  - \sum  ( f_{(1)_{(1)}})_{-x}(a) \  
   ( f_{(1)_{(2)}})_{-y}(b) \   (f_{(2)})_{\la+x+y}(c)
   \nonumber\\
   = & \left[\Psi_{-x,-y,\la +x+y}\,
    \Big( f_{(1)}\otimes f_{(2)_{(1)}}\tt f_{(2)_{(2)}} - 
      f_{(2)_{(1)}}\otimes f_{(1)}\tt f_{(2)_{(2)}} - 
     f_{(1)_{(1)}}\tt f_{(1)_{(2)}}\otimes f_{(2)}\Big)\right] 
     \! (a\otimes b\tt c)
     \nonumber\\
   = & \Big[\Psi_{-x,-y,\la +x+y}\,
    \Big([(I\otimes \delta) \delta - (\tau \tt I) (I\otimes \delta) \delta - 
(\delta\otimes I) \delta](f)\Big)\Big] 
     \, (a\otimes b\tt c) .
\end{align}
One can prove that $\Psi$ is injective by the same arguments in the proof of Proposition \ref{prop:phi} (c), given in \cite{BK}. Therefore, by (\ref{J}), we obtain the co-Jacobi identity.
\vskip .2cm 

(b) Since $F$ is a Lie conformal algebra homomorphism, $\pi^{L_1}_\mu\circ F^{*}   =(F \otimes F)^{*}\circ \pi^{L_2}_\mu$. Let  $V$ be a good  $\cp$-submodule of $L_{2}^{\c}$. Then $F^{*}(V)$ is a good  $\cp$-submodule    of $L_{1}^{*}$, because using Proposition \ref{prop:phi} (d), we have

$$
\begin{aligned}
\pi^{L_1}_\mu\left(F^{*} (V)\right) & =(F \otimes F)^{*}\left(\pi^{L_2}_\mu (V)\right) \subset(F \otimes F)^{*} (\Phi_{-\mu}^{L_2} (V \otimes V) )\\
& =\Phi_{-\mu}^{L_1} \left((F^{*} \otimes F^{*})(V \otimes V)\right)=\Phi_{-\mu}^{L_1} \left(F^{*} (V) \otimes F^{*} (V)\right).
\end{aligned}
$$
Consequently, $F^{*}\!\left(L_{2}^{0}\right) \subseteq L_{1}^{0}$, so the restriction of $F^{*}$ to $L_{2}^{0}$ induces a well-defined map
$$
F^{0}: L_{2}^{0} \rightarrow L_{1}^{0}, 
$$
which is the unique $\cp$-linear map that makes the following diagram commutative:

\begin{equation*}
\begin{tikzcd} 
L_2^\c \arrow[r, "F^*"] & L_1^\c \\ 
L_2^0 \arrow[r, dashed, "F^0"] \arrow[u, hook] & L_1^0 \arrow[u, hook] 
\end{tikzcd}
\end{equation*}
It remains to see that $F^{0}$ is a homomorphism of differential Lie coalgebras, that is 
$$
(F^{0}\tt F^{0})\circ \de_{L_2^0}=\de_{L_1^0}\circ F^{0}.
$$ 
\noindent
By (\ref{phi-pi}) and (\ref{g-h}),  we have

\begin{align*}
 \left[ \Phi_{-\mu}^{L_1} [(\de_{L_1^0}\circ F^{0})(f)]\right]_\la (a\tt b)  
   & = \left[ \pi_{\mu}^{L_1} (F^{0}(f))\right]_\la (a\tt b)
     = (F^{0}(f))_\la ([a_\mu b])=f_\la (F([a_\mu b]))
     \\
   & = f_\la ([F(a)_\mu F(b)])=\sum  ( f_{(1)})_{-\mu}(F(a)) \  (f_{(2)})_{\la+\mu}(F(b)) 
     \\
   & = \sum  (F^0( f_{(1)}))_{-\mu}(a) \  (F^0( f_{(2)}))_{\la+\mu}(b) 
     \\
    &=  \left[ \Phi_{-\mu}^{L_1} [((F^{0}\tt F^{0})\circ \de_{L_2^0})(f)]\right]_\la (a\tt b)
\end{align*}
and using the fact that $\Phi_\mu$ is injective, we finish the proof.
\end{proof}

 It follows that the assignment $L \mapsto L^{0}$ and $f \mapsto f^{0}$ defines a contravariant functor from $\mathcal{L}^{\scriptscriptstyle CA}$ to $\mathcal{L}^{\scriptscriptstyle DC}$. We shall let   $\mathcal{L}^{\scriptscriptstyle  CA}_{free}$ and $ \mathcal{L}^{\scriptscriptstyle DC}_{free}$  denote, respectively, the categories of  
Lie conformal algebras that  are free as   $\cp$-modules and   differential Lie coalgebras  that  are free as  $\cp$-modules.  The previous contravariant functors clearly restrict to these categories.

\begin{theorem}\label{H}
The contravariant functor ${ }^{0}: \mathcal{L}^{\scriptscriptstyle CA}_{free} \rightarrow \mathcal{L}^{\scriptscriptstyle DC}_{free}$ is right adjoint   to the contravariant functor ${ }^{\c}: \mathcal{L}^{\scriptscriptstyle DC}_{free} \rightarrow \mathcal{L}^{\scriptscriptstyle CA}_{free}$; i.e., for every Lie conformal algebra $L$ and differential Lie coalgebra $M$, there is a natural set bijection

\begin{equation}\label{Hom}
\operatorname{Hom}_{\mathcal{L}^{\scriptscriptstyle CA}_{free}}\left(L, M^{\c}\right) \simeq \operatorname{Hom}_{\mathcal{L}^{\scriptscriptstyle DC}_{free} }\left(M, L^{0}\right).
\end{equation}

\end{theorem}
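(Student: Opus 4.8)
The plan is to produce the bijection \eqref{Hom} concretely, by combining transposition with the canonical embedding $\chi$ of \prref{varphi-}, and then to check that the two resulting set maps are mutually inverse and carry homomorphisms to homomorphisms. For a homomorphism of Lie conformal algebras $\phi\colon L\to M^{\c}$ I set $\Theta(\phi):=\phi^{*}\circ\chi_{M}\colon M\to L^{\c}$, and for a homomorphism of differential Lie coalgebras $\psi\colon M\to L^{0}\subseteq L^{\c}$ I set $\Xi(\psi):=\psi^{*}\circ\chi_{L}\colon L\to M^{\c}$. Unwinding the definitions of the transpose and of $\chi$, these are given by
\[
[\Theta(\phi)(m)]_{\la}(a)=[\phi(a)]_{-\la}(m),\qquad [\Xi(\psi)(l)]_{\la}(m)=[\psi(m)]_{-\la}(l),
\]
for $a,l\in L$ and $m\in M$. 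A short check using conformal sesquilinearity and \eqref{eq:p} shows that $\Theta(\phi)$ and $\Xi(\psi)$ are $\cp$-module maps, and the substitution $\la\mapsto-\la$ in the two displayed formulas gives $\Xi\circ\Theta=\mathrm{id}$ and $\Theta\circ\Xi=\mathrm{id}$ at once. Thus, once the targets are justified, $\Theta$ and $\Xi$ are inverse bijections.

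Next I would show that $\psi:=\Theta(\phi)$ actually takes values in $L^{0}$ and is a coalgebra map. The crux is the identity
\[
\pi_{\mu}(\psi(m))=\Phi_{-\mu}\big((\psi\tt\psi)(\de_{M}(m))\big),\qquad m\in M.
\]
To verify it I evaluate each side on $a\tt b$. By \eqref{piii} and the formula for $\psi$, the left-hand side is $[\phi([a_\mu b])]_{-\la}(m)$; since $\phi$ preserves brackets this equals $([\phi(a)_\mu\phi(b)])_{-\la}(m)$, which by the bracket formula \eqref{eq:2} on $M^{\c}$ is $\sum[\phi(a)]_{\mu}(m_{(1)})\,[\phi(b)]_{-\la-\mu}(m_{(2)})$. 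Using the formula for $\Phi$ in \prref{prop:phi} together with the definition of $\psi$, the right-hand side produces the same sum. The identity shows $\pi_{\mu}(\operatorname{Im}\psi)\subseteq\Phi_{-\mu}(\operatorname{Im}\psi\tt\operatorname{Im}\psi)$, so $\operatorname{Im}\psi$ is a good $\cp$-submodule and hence $\operatorname{Im}\psi\subseteq L^{0}$; comparing the identity with the defining relation \eqref{phi-pi} of $\de_{L^{0}}$ and using the injectivity of $\Phi_{-\mu}$ from \prref{prop:phi}(c) gives $\de_{L^{0}}(\psi(m))=(\psi\tt\psi)(\de_{M}(m))$, i.e. $\psi\in\operatorname{Hom}_{\mathcal{L}^{\scriptscriptstyle DC}_{free}}(M,L^{0})$.

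Symmetrically, for $\phi:=\Xi(\psi)$ I would prove $\phi([a_\mu b])=[\phi(a)_\mu\phi(b)]$ directly. Evaluating $[\phi([a_\mu b])]_{\la}(m)=[\psi(m)]_{-\la}([a_\mu b])$ and applying \eqref{g-h} to $\psi(m)\in L^{0}$ (whose coproduct is $(\psi\tt\psi)(\de_{M}(m))$, as $\psi$ is a coalgebra map) reduces the expression, after the substitutions prescribed by the definition of $\phi$, to $\sum[\phi(a)]_{\mu}(m_{(1)})\,[\phi(b)]_{\la-\mu}(m_{(2)})$, which is exactly $([\phi(a)_\mu\phi(b)])_{\la}(m)$ by \eqref{eq:2}; hence $\phi\in\operatorname{Hom}_{\mathcal{L}^{\scriptscriptstyle CA}_{free}}(L,M^{\c})$. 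Naturality of \eqref{Hom} in both $L$ and $M$ then follows from the functoriality of transposition, the naturality of $\chi$, and the defining diagram of $F^{0}$, and is a routine diagram chase. I expect the only delicate point to be the matching of the triple (resp. double) indexed Sweedler sums in the two displayed identities: the bookkeeping of the spectral parameters $\mu$, $-\mu$, and $\la\pm\mu$ under $\la\mapsto-\la$ must line up precisely, while everything else is formal.
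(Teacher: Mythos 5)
Your proof is correct, and the bijection you construct is in fact literally the same as the paper's: unwinding the paper's $\alpha(f)=f^{0}\circ\phi_{M}$ and $\beta(g)=g^{*}\circ\psi_{L}$ (stated after Theorem~\ref{H}) against the evaluation formulas \eqref{phii}--\eqref{psii} gives exactly your $\Theta(\phi)=\phi^{*}\circ\chi_{M}$ and $\Xi(\psi)=\psi^{*}\circ\chi_{L}$. What differs is the verification scheme. The paper proves adjointness in unit form: it defines the natural transformations $\phi_{M}=i_{(M^{\c})^{0}}^{-1}\circ\chi_{M}$ and $\psi_{L}=(i_{L^{0}})^{*}\circ\chi_{L}$, checks that $\chi_{M}(M)$ is a good $\cp$-submodule so that $\phi_{M}$ lands in $(M^{\c})^{0}$, verifies naturality of $\phi$ and $\psi$, and then establishes the two triangle identities \eqref{triang1} and \eqref{triang2}; the morphism properties of the transported maps are inherited from the earlier functoriality results ($F^{0}$ and $F^{*}$ being coalgebra resp.\ conformal algebra homomorphisms). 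You instead verify the Hom-set bijection directly: your key identity $\pi_{\mu}(\psi(m))=\Phi_{-\mu}\bigl((\psi\otimes\psi)(\de_{M}(m))\bigr)$ subsumes the paper's goodness computation for $\chi_{M}(M)$ (which is the case $\phi=\mathrm{id}_{M^{\c}}$, via \eqref{eq:2}), and combined with \eqref{phi-pi} and the injectivity of $\Phi_{-\mu}$ from \prref{prop:phi}(c) it simultaneously shows that $\Theta(\phi)$ lands in $L^{0}$ and is a coalgebra map; your check that $\Xi(\psi)$ preserves $\la$-brackets via \eqref{g-h} and \eqref{eq:2} is likewise correct, as is the observation that $\la\mapsto-\la$ makes $\Theta$ and $\Xi$ mutually inverse. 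Your route buys concreteness and makes fully explicit the well-definedness checks that the paper partly delegates (indeed, that $\phi_{M}$ and $\psi_{L}$ are themselves morphisms in $\mathcal{L}^{\scriptscriptstyle DC}_{free}$ resp.\ $\mathcal{L}^{\scriptscriptstyle CA}_{free}$ is only implicit in the paper's computations); the paper's route buys the unit data and its naturality explicitly, which is structurally cleaner for categorical reuse. The one place where you assert rather than prove is naturality of \eqref{Hom} in $L$ and $M$; given your explicit formulas this is indeed the routine chase you claim (it is where the paper's naturality squares for $\phi$ and $\psi$ reappear), but in a full write-up you should display it, since adjointness as stated requires it.
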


\begin{proof}
 In the proof of the above, one must show that there exist natural transformations

$$
\phi: 1_{\mathcal{L}^{\scriptscriptstyle DC}_{free}} \rightarrow{ }^{0  \,\circ\,  (\c)} \quad \text { and } \quad \psi: 1_{\mathcal{L}^{\scriptscriptstyle CA}_{free}} \rightarrow { }^{(\c) \,\circ\, 0}
$$
\noindent
such that the composite morphisms

\begin{equation}\label{triang1}
M^{\c} \xrightarrow{\psi_{M^{\c}}} M^{(\c) 0 (\c)} \xrightarrow{\left(\phi_{M}\right)^{*}} M^{\c} \quad (\text {for } M \in \operatorname{Obj} \, \mathcal{L}^{\scriptscriptstyle DC}_{free})
\end{equation}
\noindent
and

\begin{equation}\label{triang2}
L^{0} \xrightarrow{\phi_{L^{0}}} L^{0 (\c) 0} \xrightarrow{\left(\psi_{L}\right)^{0}} L^{0} \quad (\text {for } L \in \mathrm{Obj} \ \mathcal{L}^{\scriptscriptstyle CA}_{free})
\end{equation}
\vskip .2cm
\noindent are the identities. Observe that $\phi$ and $\psi$ are defined by the commutative diagrams

\begin{equation*}
\begin{tikzcd}
M \arrow[r, "\chi_M"] \arrow[rd, "\phi_M"'] & (M^{*c})^{*c} \\
& (M^{*c})^{0} \arrow[u,hook,swap, "i_{(M^{*c})^0}"] 
\end{tikzcd}
\qquad \text{and} \qquad
\begin{tikzcd}
L \arrow[r, "\chi_L"] \arrow[rd, "\psi_L"'] & (L^{*c})^{*c} \arrow[d, "(i_{L^0})^*"] \\
& (L^0)^{*c}
\end{tikzcd}
\end{equation*}
\vskip .2cm
\noindent where the maps $i$ are inclusions, and $\chi_{V}$ is defined in Proposition \ref{varphi-}; in other words, $\chi_{V}$ is the natural injection of $V$ into its double conformal dual. Note that the definition of $\phi_{M}$ makes sense because $\chi_{M}(M)$ is a good  $\cp$-submodule   of $(M^{*c})^{*c}$. More precisely, we have that $\pi_{\mu}(\chi(M)) \subseteq \Phi_{-\mu}(\chi(M) \otimes \chi(M))$,   since using (\ref{eq:2}), 
 we have (with $m \in M$ and  $f,g\in M^\c$)

\begin{align*}
[\pi_{\mu}(\chi(m))]_\la (f \otimes g) & = (\chi(m))_\la \,([f_{\mu}g])= ([f_{\mu}g])_{-\la}(m)  = [\Phi_\mu (f\otimes g)]_{-\la} (\de(m)) \\
&=  \sum f_{\mu}(m_{(1)}) g_{-\lambda-\mu}(m_{(2)})  
= \sum (\chi(m_{(1)})_{-\mu}(f))(\chi(m_{(2)})_{\lambda+\mu}(g)) \\
&= \left[\Phi_{-\mu}\left(\sum \chi(m_{(1)}) \otimes \chi(m_{(2)})\right)\right]_\la (f \otimes g).
\end{align*}
\vskip .2cm

Observe that, for \(M \in \mathcal{L}^{\scriptscriptstyle DC}_{free}\), \(\phi_M: M \to (M^{\c})^0\) is the restriction of the evaluation map:
\vskip .05cm
   \begin{equation}\label{phii}
   \left[\phi_M(m)\right]_\mu (f) = f_{-\mu}(m) \quad \forall\, m \in M, \, f \in M^\c,
   \end{equation}
\vskip .2cm
\noindent since \(\phi_M = i_{(M^{*c})^0}^{-1} \circ \chi_M\), where \(i_{(M^{*c})^0}^{-1}\) denotes the restriction of the inverse to the image of the inclusion, and therefore
\[
\left[\phi_M(m)\right]_\mu(f) = \left[i_{(M^{*c})^0}^{-1}(\chi_M(m))\right]_\mu (f) =  \left[ \chi_M(m) \right]_\mu (f)= f_{-\mu}(m) .
\]
\vskip .2cm
\noindent Similarly,  for \(L \in \mathcal{L}^{\scriptscriptstyle CA}_{free}\), \(\psi_L: L \to (L^0)^\c\) is defined by:
\vskip .1cm
   \begin{equation}\label{psii}
   \left[\psi_L(l)\right]_\mu(g) = g_{-\mu}(l) \quad \forall \, l \in L, \, g \in L^0,
    \end{equation}
since \(\psi_L = (i_{L^0})^* \circ \chi_L\).
\vskip .2cm
\noindent 
{\it Naturality of \(\phi\)}: For a morphism \(f: M \to N\) in \(\mathcal{L}^{\scriptscriptstyle DC}_{free}\) we must show:
\[
\begin{tikzcd}
M \arrow[r, "\phi_M"] \arrow[d, "f"'] & (M^{*c})^0 \arrow[d, "(f^{*})^0"] \\
N \arrow[r, "\phi_N"'] & (N^{*c})^0
\end{tikzcd}
\]
commutes, i.e., \((f^{*})^0 \circ \phi_M = \phi_N \circ f\), but this follows  from the computation with $h \in N^{*c}$:
\vskip -.36cm
\[
\left[(f^{*})^0[\phi_M(m)]\right]_\mu(h) = \left[\phi_M(m)\right]_\mu(f^{*}(h)) =  (f^{*}(h))_{-\mu}(m) = h_{-\mu}(f(m))
=\left[\phi_M(f(m))\right]_\mu(h).
\]
\vskip .3cm
\noindent 
{\it Naturality of \(\psi\)}: 
For \(g: L \to K\) in \(\mathcal{L}^{\scriptscriptstyle CA}_{free}\), the diagram commutes:
\[
\begin{tikzcd}
L \arrow[r, "\psi_L"] \arrow[d, "g"'] & (L^0)^\c \arrow[d, "(g^0)^*"] \\
K \arrow[r, "\psi_K"'] & (K^0)^\c
\end{tikzcd}
\]
since by the universal property of evaluation:
\[
\left[(g^0)^*[\psi_L(l)]\right]_\mu(h) = \left[\psi_L(l)\right]_\mu(g^0(h)) = (g^0(h))_{-\mu}(l) = h_{-\mu}(g(l)) = \left[\psi_K(g(l))\right]_\mu(h)
\]
for all \(h \in K^0\). Thus, \((g^0)^* \circ \psi_L = \psi_K \circ g\).
\vskip .2cm
Now, we prove (\ref{triang1}): 
 \((\phi_M)^* \circ \psi_{M^\c} = \text{Id}_{M^\c}\). 
For \(f \in M^\c\), \(m \in M\), applying (\ref{psii}) and (\ref{phii}), we obtain
\[
\left[(\phi_M)^*(\psi_{M^\c}(f))\right]_\mu(m) = \left[\psi_{M^\c}(f)\right]_\mu(\phi_M(m)) =
\left[\phi_M(m)\right]_{-\mu}(f)= f_\mu(m).
\]
\vskip .1cm
\noindent Therefore, \((\phi_M)^* \circ \psi_{M^\c} = \text{Id}_{M^\c}\).
\vskip .2cm
To finish the proof, we must prove (\ref{triang2}): 
\((\psi_L)^0 \circ \phi_{L^0} = \text{Id}_{L^0}\). 
For \(g \in L^0\), \(l \in L\), we have
\[
\left[(\psi_L)^0(\phi_{L^0}(g))\right]_\mu(l) = 
\left[\phi_{L^0}(g)\right]_\mu (\psi_L(l)).
\]
By definition of \(\phi_{L^0}\):
\[
\phi_{L^0}(g) = \chi_{L^0}(g)\big|_{((L^0)^{\c})^0} \in ((L^0)^{\c})^0.
\]
Thus, using (\ref{psii}):
\[
\left[\phi_{L^0}(g)\right]_\mu (\psi_L(l)) = \left[\chi_{L^0}(g)\right]_\mu (\psi_L(l)) = \left[\psi_L(l)\right]_{-\mu}(g)
= g_\mu(l).
\]
\vskip .2cm
\noindent Therefore, \((\psi_L)^0 \circ \phi_{L^0} = \text{Id}_{L^0}\), finishing the proof.
\end{proof}

\ 

The natural set bijection in (\ref{Hom}) 
 is given by

 \begin{equation*}
   \alpha :
\operatorname{Hom}_{\mathcal{L}^{\scriptscriptstyle CA}_{free}}\left(L, M^{\c}\right) \rightarrow \operatorname{Hom}_{\mathcal{L}^{\scriptscriptstyle DC}_{free} }\left(M, L^{0}\right)
 \end{equation*}
\vskip .2cm
\noindent with $\alpha(f):=f^0 \circ \phi_M$ for any $f:L\mapsto M^\c$. The inverse map is

\begin{equation*}
   \beta :
   \operatorname{Hom}_{\mathcal{L}^{\scriptscriptstyle DC}_{free} }\left(M, L^{0}\right)
 \rightarrow \operatorname{Hom}_{\mathcal{L}^{\scriptscriptstyle CA}_{free}}\left(L, M^{\c}\right)
 \end{equation*}
\vskip .2cm
\noindent with $\beta(g):=g^* \circ \psi_L$, for any $g:M\mapsto L^0$.

In Theorem \ref{H}, we restricted to free objects in order to get $\chi$  injective, since for any $f\in V^\c$, we have $f_\mu(\textrm{Tor }V)=0$; see \cite{DK}.

Note that if $L$ is of finite rank, then $L^{0}=L^{\c}$ (see Remark \ref{rem}). 

\


\section{The functor Loc: $\mathcal{L}^{\scriptscriptstyle DC} \rightarrow \mathcal{L}_{l . f.}^{\scriptscriptstyle DC} $ }\lbb{Loc}
  
 
\ 

In this section, we consider a second functor.
 
 \begin{definition}
 A differential Lie coalgebra \( C \) is \textit{locally finite} if and only if every element \( x \in C \) lies in a Lie subcoalgebra \( D \subseteq C \) of finite rank.
 \end{definition}

We shall let $\mathcal{L}_{l . f.}^{\scriptscriptstyle DC}$ denote the category of locally finite differential Lie coalgebras.
 The second functor we   consider is the functor Loc:  $\mathcal{L}^{\scriptscriptstyle DC} \rightarrow \mathcal{L}_{l . f.}^{\scriptscriptstyle DC} $, which assigns to each differential  Lie coalgebra its locally finite part.

\begin{definition} For any differential Lie coalgebra $M$, set

$$
\operatorname{Loc}(M)=\sum_{N\, | \, N \text { is a finite rank differential Lie subcoalgebra of } M} N \text {. }
$$

\end{definition} 
\vskip .2cm
\noindent $\operatorname{Loc}(M)$ is obviously also the sum of all locally finite differential Lie subcoalgebras of $M$, and hence it is the largest locally finite differential Lie subcoalgebra of $M$.

Denote by $\iota_{\text {Loc} (M)}: \operatorname{Loc}(M) \hookrightarrow M$ the inclusion of Loc$(M)$ into $M$. If $f: M_{1} \rightarrow M_{2}$ is a  differential Lie coalgebra map, then it is clear that $f\left(\operatorname{Loc}(M_{1})\right) \subset \operatorname{Loc}(M_{2})$. Thus, $f: M_{1} \rightarrow M_{2}$ naturally induces a map
\vskip -.1cm
$$
\operatorname{Loc}(f): \operatorname{Loc}\left(M_{1}\right) \rightarrow \operatorname{Loc}\left(M_{2}\right)
$$

\noindent of $\mathcal{L}^{\scriptscriptstyle DC}$ such that
$$
\iota_{\operatorname{Loc}\left(M_{2}\right)} \circ \operatorname{Loc}(f)=f \circ \iota_{\operatorname{Loc}\left(M_{1}\right)}.
$$
\vskip .3cm
\noindent In this way, we get a functor from $\mathcal{L}^{\scriptscriptstyle DC}$ to $\mathcal{L}_{l . f.}^{\scriptscriptstyle DC}$ denoted {\it Loc}.

\ 

The first functor we studied in this work was the functor $(\cdot)^{0}$, often referred to as the 'upper zero' functor.  Now we establish a relation with the functor Loc. This relation is stated in the following theorem.

\begin{theorem}\label{LOC} For any Lie conformal algebra  $L$  that is free as a
$\cp$-module,

$$
\operatorname{Loc}\left(L^{0}\right)=\left\{f \in L^{\c} \mid \operatorname{ker} f \text { contains an ideal of } L  \text { of  cofinite rank}\right\}
$$
\end{theorem}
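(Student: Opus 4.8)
The plan is to prove the two inclusions separately; for $f\in L^\c$ I write $\ker f=\{b\in L:f_\la(b)=0\}$ and call an ideal $I\subseteq L$ \emph{cofinite} when $L/I$ has finite rank. The single tool that drives everything is the identity \eqref{g-h}: for $f$ in a good submodule, with $\de(f)=\sum f_{(1)}\tt f_{(2)}$, one has $f_\la([a_\mu b])=\sum(f_{(1)})_{-\mu}(a)\,(f_{(2)})_{\la+\mu}(b)$, which translates the bracket of $L$ into the coproduct of $L^0$ and back.

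For the inclusion $\supseteq$, suppose $\ker f$ contains a cofinite ideal $I$. The projection $p:L\to L/I$ is a homomorphism of Lie conformal algebras, and $L/I$ has finite rank, so by \reref{rem} we have $(L/I)^0=(L/I)^\c$. By the functoriality of the upper-zero construction, $p^0:(L/I)^0\to L^0$ is a homomorphism of differential Lie coalgebras; its image is a subcoalgebra of $L^0$ that is a $\cp$-module quotient of $(L/I)^\c$, hence of finite rank, so it is contained in $\operatorname{Loc}(L^0)$. Because $I$ is a $\cp$-submodule contained in $\ker f$, the map $f$ factors as $\bar f\circ p$ with $\bar f\in(L/I)^\c=(L/I)^0$, and $p^0(\bar f)=p^*(\bar f)=f$. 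Thus $f\in\operatorname{Loc}(L^0)$.

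For the inclusion $\subseteq$, take $f\in\operatorname{Loc}(L^0)$, so that $f$ lies in some finite-rank differential Lie subcoalgebra $D\subseteq L^0\subseteq L^\c$. Since $(\pa^{n}g)_\la=(-\la)^{n}g_\la$, the module $L^\c$ is torsion-free, so $D$ is finitely generated and torsion-free over the PID $\cp$, hence free of finite rank; by \thref{prop:dual}, $D^\c$ is then a Lie conformal algebra that is free of finite rank. I define the evaluation map $e:L\to D^\c$ by $[e(l)]_\mu(g)=g_{-\mu}(l)$ for $g\in D$. It is $\cp$-linear, and I claim it is a homomorphism of Lie conformal algebras: expanding $([e(a)_\nu e(b)])_\mu(g)$ through \eqref{eq:2} and expanding $[e([a_\nu b])]_\mu(g)=g_{-\mu}([a_\nu b])$ through \eqref{g-h} both produce the expression $\sum(g_{(1)})_{-\nu}(a)\,(g_{(2)})_{\nu-\mu}(b)$. (Equivalently, $e=(i_D)^*\circ\psi_L$ is a composite of conformal-algebra homomorphisms, using \prref{map} and the fact that $\psi_L$ is a homomorphism.) Now set $I:=\ker e=\bigcap_{g\in D}\ker g$, an ideal of $L$. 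Since $f\in D$ we get $I\subseteq\ker f$, and $e$ induces an injection $L/I\hookrightarrow D^\c$ into a free module of finite rank, so $L/I$ has finite rank; that is, $I$ is a cofinite ideal contained in $\ker f$, and $f$ lies in the right-hand side.

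The main obstacle I expect is the forward inclusion $\subseteq$, and within it two points: first, identifying the correct object to dualize, namely a finite-rank subcoalgebra $D\ni f$, and constructing from it the evaluation homomorphism $e:L\to D^\c$ whose kernel is forced to be a cofinite ideal inside $\ker f$; second, verifying that $e$ genuinely respects the conformal brackets, where the identity \eqref{g-h} is precisely the compatibility needed and must be applied with the variable substitutions matching \eqref{eq:2}. The finiteness of $L/I$ is then routine once $D^\c$ is known to be free of finite rank, and the reverse inclusion $\supseteq$ is essentially formal, resting only on the finite-rank identity $(L/I)^0=(L/I)^\c$ and the functoriality of ${}^0$.
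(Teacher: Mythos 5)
Your proof is correct and follows essentially the same route as the paper: the forward inclusion via the evaluation homomorphism $L \to D^{\c}$ (the paper's $\alpha=(i_V)^*\circ \psi_L$, which satisfies $[\alpha(\ell)]_\lambda(f)=f_{-\lambda}(\ell)$) whose kernel is a cofinite-rank ideal annihilated by every element of $D$, and the reverse inclusion by factoring through the finite Lie conformal algebra $L/I$ and pulling $(L/I)^{\c}$ back along $p_I^*$ to a finite-rank subcoalgebra $W\ni f$ of $L^{0}$. The only cosmetic difference is that you deduce goodness of $W=\operatorname{im}(p_I^*)$ from the already-established functoriality of ${}^{0}$ together with $(L/I)^{0}=(L/I)^{\c}$, whereas the paper verifies $\pi_\mu(W)\subseteq \Phi_{-\mu}(W\otimes W)$ by direct computation with the coproduct of $(L/I)^{\c}$ --- the same calculation in disguise; likewise your explicit check that $e$ respects $\lambda$-brackets spells out what the paper leaves implicit in $\psi_L$ and $(i_V)^*$.
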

\vskip .2cm
\begin{proof}

Let $V \subseteq L^0$ be a finite rank differential Lie subcoalgebra. In this case,

\begin{equation*}
\begin{tikzcd}
V \arrow[r,hook, "i_V"] & L^0 \arrow[r,hook, "i_{L_0}"] & L^\c
\end{tikzcd}
\end{equation*}
\vskip .2cm
\noindent Consequently, the following dual diagram is obtained:
\vskip .01cm
\begin{equation*}
\begin{tikzcd}
(L^\c)^\c \arrow[r, "(i_{L_0})^*"] & (L^0)^\c \arrow[r, "(i_V)^*"] & V^\c \\
& L \arrow[u,swap, "\psi_L"] \arrow[ul, "\chi_L"'] \arrow[ur, dashed, "\alpha"']
\end{tikzcd}
\end{equation*}
\noindent and, for all $f \in V$ and $\ell \in L$, we have
\vskip -.15cm
\begin{align*}[\alpha(\ell)]_\lambda(f) & = [i_V^*(\psi_L(\ell))]_\lambda(f)
= [\psi_L(\ell)]_\lambda(i_V(f)) = [i_{L_0}^*(\chi_L(\ell))]_\lambda(i_V(f))
\\
& = [\chi_L(\ell)]_\lambda(i_{L_0}(i_V(f))) = f_{-\lambda}(\ell).
\end{align*}
\vskip .2cm
\noindent Hence, the equality  
$[\alpha(\ell)]_\lambda(f) = f_{-\lambda}(\ell)$, holds for all $f \in V$, $\ell \in L$.

Since
\begin{equation*}
\begin{tikzcd}
0 \arrow[r] & \text{ker }\alpha \arrow[r,hook,  "i"] & L \arrow[r, "\alpha"] & \text{im }\alpha \arrow[r] & 0
\end{tikzcd}
\end{equation*}
\vskip .25cm
\noindent 
with $\text{im }\alpha \subseteq V^\c$ of finite rank, it follows that  $\text{ker }\alpha$ is an ideal of $L$ of cofinite rank. Moreover, if $\ell \in \operatorname{ker}  \alpha$, then $\alpha(\ell) = 0$, and therefore
$0=[\alpha(\ell)]_{-\lambda}(f)  = f_\lambda(\ell)$.
Thus, $f_\lambda(\operatorname{ker}  \alpha) = 0$ for all $f \in V$.

Therefore 
\begin{equation*}
\text{Loc}(L^0) \subseteq\left\{f \in L^{\c} \mid \operatorname{ker} f \text { contains an ideal of } L  \text { of  cofinite rank}\right\}.
\end{equation*}

\vskip .25cm

In general, $L^{0}$ will not be locally finite, so in general $\operatorname{Loc}\left(L^{0}\right) \varsubsetneqq L^{0}$. However, there exists  a locally finite version of $L^{0}$, denoted $L^{0}_f$, which is  defined as the sum of all good $\cp$-submodules of finite rank of $L^{\c}$: 
$$
L^{0}_f=\sum_{V\,\mid \, V   \text { is a finite-rank good $\cp$-submodule of $L^{\c}$}} V .
$$
\vskip .2cm
\noindent
Since $V \subset L^{\c}$ is a 
 good $\cp$-submodule if and only if it is a 
 differential Lie subcoalgebra of 
  $L^{0}$, it follows from the above that 
 \begin{equation}\label{L_f}
 L^{0}_f=\operatorname{Loc}(L^{0}).
 \end{equation}
\vskip .2cm
Now we prove the other inclusion.  
Given any   $h\in L^\c$ whose kernel contains  an ideal $I$ of   $L$  of  cofinite rank, using (\ref{L_f}), we need to show that $h\in L^{0}_f$, that is, $h$ is in a  finite-rank good $\cp$-submodule of $L^{\c}$. Observe that $h$ gives rise to an element of $(L / I)^{\c}$. If $I$ is a cofinite-rank  ideal of $L$, then $L / I$ has the structure of a finite  Lie conformal algebra, and hence, by Theorem \ref{prop:dual}(b), $(L / I)^{\c}$ has the structure of a finite differential Lie coalgebra. Thus there is a linear map
$$
\de:(L / I)^{\c} \rightarrow(L / I)^{\c} \otimes(L / I)^{\c}
$$
\vskip .25cm
\noindent 
giving the differential Lie coalgebra structure of $(L / I)^{\c}$, and, by Remark \ref{rem}, this is the unique map satisfying the condition
\begin{equation}\label{L-I}
\Phi^{L / I}_{-\mu} \circ \de =\pi^{L/I}_\mu.
\end{equation}

Considering  the exact sequence of homomorphisms of Lie conformal algebras
$$
0 \longrightarrow I \xrightarrow{i_{I}} L \xrightarrow{p_{I}} L / I \longrightarrow 0
$$
\vskip .25cm
\noindent 
and then of the induced exact sequence

$$
0 \longrightarrow(L / I)^{\c} \xrightarrow{\left(p_{I}\right)^{*}} L^{\c} \xrightarrow{\left(i_{I}\right)^{*}} I^{\c} \longrightarrow 0
$$
\vskip .25cm
\noindent 
one sees that
$$
W:=\left\{g \in L^{\c} \mid g_\la (I)=0\right\}=\operatorname{ker}\left[\left(i_{I}\right)^{*}\right]=
\operatorname{im}\left[\left(p_{I}\right)^{*}\right]
$$
\vskip .25cm
\noindent 
has the structure of a finite differential Lie  coalgebra, and $h\in W$. 

It remains to prove that \( W \) is a good $\cp$-submodule of \( L^\c \). Therefore, we need to show that the dual of the multiplication map \( \pi_\mu: L^\c \to \mathbb{C}[\mu]\otimes (L \otimes L)^\c \) satisfies \( \pi_\mu(W) \subseteq \Phi_{-\mu}(W \otimes W) \), where \( \Phi_{-\mu}: L^\c  \otimes L^\c  \to \mathbb{C}[\mu]\otimes (L \otimes L)^\c \) is the natural inclusion. 

Observe that \( \delta(\phi) \) can be expressed as a finite sum:
    \[
    \delta(\phi) = \sum_{k=1}^n \phi_k \otimes \psi_k \quad \text{for some } \phi_k, \psi_k \in (L/I)^\c.
    \]
Recall that \( W = \operatorname{im}(p^*_I) \cong (L/I)^\c \), where \( p_I: L \to L/I \) is the quotient map. For \( f \in W \), there exists \( \phi \in (L/I)^\c \) such that \( f = p^*_I(\phi)=\phi \circ p_I \). Applying \( \pi_\mu \)   to \( f \), we get:
\vskip .25cm
\noindent 
    \[
    [\pi_\mu(f)]_\la(a \otimes b) = f_\la([a_\mu b]) = \phi_\la(p_I ([a_\mu b])) = \phi_\la([p_I (a)_\mu\, p_I (b)])=[\pi^{L/I}_\mu(\phi)]_\la(p_I (a)\otimes p_I (b)).
    \]
\vskip .25cm
\noindent 
    Using the coalgebra structure of \( (L/I)^\c \), as given by  (\ref{L-I}), this becomes:
\vskip .25cm
\noindent
    \[
    \textstyle\sum_{k=1}^n \  (\phi_k)_{-\mu}(p_I(a)) \  (\psi_k)_{\la+\mu}(p_I(b)).
    \]
\vskip .25cm
\noindent
    Define \( f_k = \phi_k \circ p_I \) and \( g_k = \psi_k \circ p_I \). Since \( \phi_k, \psi_k \in (L/I)^\c \), we have \( f_k, g_k \in W \). Thus:
    \[
    [\pi_\mu(f)]_\la(a \otimes b) = \sum_{k=1}^n (f_k)_{-\mu}(a) \cdot (g_k)_{\la+\mu}(b) = \Big[\Phi_{-\mu} \left( \textstyle\sum_{k=1}^n f_k \otimes g_k \right) \Big]_\la(a \otimes b).
    \]
    This shows  that \( \pi_\mu(f) \in \Phi_{-\mu}(W \otimes W) \) for all \( f \in W \). Therefore, we conclude \( \pi_\mu(W) \subseteq \Phi_{-\mu}(W \otimes W) \), showing that \( W \) is a good $\cp$-submodule of \( L^\c \), as required.   
    \end{proof}

\ 

\begin{corollary}
  If an infinite-rank Lie conformal algebra $L$ is simple, then $\operatorname{Loc}(L^0) = 0$.
\end{corollary}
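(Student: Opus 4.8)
The plan is to deduce the vanishing directly from \thref{LOC}. The first step is to record that a simple Lie conformal algebra is torsion-free, so that \thref{LOC} applies: the center of $L$ is an ideal, it is proper because simplicity forces $[L_\la L]\neq 0$, hence it is $0$ by simplicity; since $\mathrm{Tor}\,L\subseteq\mathrm{Center}\,L$ (see \cite{DK}), we get $\mathrm{Tor}\,L=0$. Taking $L$ free as a $\cp$-module, \thref{LOC} identifies
\[
\operatorname{Loc}(L^0)=\{\,f\in L^\c \mid \ker f \text{ contains an ideal of } L \text{ of cofinite rank}\,\}.
\]

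The second step is a short analysis of which ideals of $L$ are of cofinite rank. By simplicity the only ideals are $0$ and $L$. An ideal $I$ is cofinite exactly when $L/I$ has finite rank: for $I=L$ the quotient is $0$, so $L$ is cofinite; for $I=0$ the quotient is $L$, which has infinite rank by hypothesis, so $0$ is \emph{not} cofinite. Hence the unique cofinite-rank ideal of $L$ is $L$ itself.

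Combining the two steps concludes the argument: any $f\in\operatorname{Loc}(L^0)$ has $\ker f$ containing a cofinite-rank ideal, and that ideal must be $L$, so $\ker f\supseteq L$ and therefore $f=0$. Thus $\operatorname{Loc}(L^0)=0$.

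There is no computational obstacle here; the whole content is the interplay between simplicity and cofinite rank. The only point requiring care is the freeness hypothesis needed to invoke \thref{LOC}. I would either adopt freeness as a standing assumption (as in the rest of this section) or observe that only the inclusion $\operatorname{Loc}(L^0)\subseteq\{f\mid \ker f \supseteq \text{a cofinite-rank ideal}\}$ is used, and that inclusion is precisely the part established for free $L$ in the proof of \thref{LOC}, so that no input beyond what is already proved is required.
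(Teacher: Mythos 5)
Your argument is correct and is essentially the intended one: the paper states this corollary without proof as an immediate consequence of \thref{LOC}, and your two steps (simplicity leaves only the ideals $0$ and $L$; infinite rank rules out $0$ as cofinite, so the unique cofinite-rank ideal is $L$ itself, forcing $\ker f \supseteq L$ and $f=0$) are precisely that consequence. One caution: your opening inference ``simple $\Rightarrow$ torsion-free, so \thref{LOC} applies'' is not valid as written, because over the PID $\cp$ a torsion-free module of infinite rank need not be free (e.g.\ $\CC(\partial)$ is torsion-free but not free), and \thref{LOC} is stated for $L$ free as a $\cp$-module. Your own closing remark already supplies the correct repair: read the corollary under the standing freeness hypothesis of \thref{LOC} (as the paper implicitly does), or observe that only the inclusion $\operatorname{Loc}(L^0)\subseteq\{f \mid \ker f \text{ contains a cofinite-rank ideal}\}$ is used, and the paper's proof of that inclusion does not invoke the injectivity of $\chi_L$.
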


In general, $L^{0}$ need not be locally finite, hence  $0\neq \operatorname{Loc}(L^{0}) \varsubsetneqq L^{0}$.  We provide an example of this phenomenon in the following section.  
%

\section{An example of a Lie conformal algebra $L$ for which $ 0 \neq \operatorname{Loc}(L^{0}) \varsubsetneqq L^{0}$}\lbb{Example}

For any complex vector space $V$, define the free $\CC[\partial]$-module $\Cur (V):=\cp\tt V$. 
 There is a canonical injective $\CC[\partial]$-module homomorphism
\vskip 0.1mm
\begin{equation}\label{THETA}
\Theta: \Cur(V^*) \hookrightarrow  (\Cur(V))^{\c},
\ \; \ \ \ 
\Theta\!\Big(\sum_i \partial^{k_i}\!\otimes \phi_i\Big)_\lambda\!\Big(\sum_j p_j(\partial)\!\otimes v_j\Big)
= \sum_{i,j}(-\lambda)^{k_i} p_j(\lambda)\,\phi_i(v_j).
\end{equation}

\noindent
For any $F\in \Cur(V^*)$, one immediately checks that $\Theta(F)$ is $\CC$-linear and  that it satisfies the conformal sesquilinearity condition 
$(\Theta(F))_\lambda(\partial u)=\lambda (\Theta(F))_\lambda(u)$, obtaining that $\Theta(F)\in (\Cur(V))^{\c}$. We have that $\Theta$ intertwines the
$\CC[\partial]$-actions (using $(\partial f)_\lambda=-\lambda f_\lambda$ on the dual), and it is injective since $0=\Theta\left(\sum_i \partial^{i}\!\otimes \phi_i\right)$ implies that
\begin{equation*}
  \Theta\!\Big(\sum_i \partial^{i}\!\otimes \phi_i\Big)_\lambda\!\Big(1\!\otimes v\Big)
= \sum_{i}(-\lambda)^{i} \,\phi_i(v)\qquad \hbox{for all }\ v\in V,
\end{equation*}
obtaining that $\phi_i=0$ for all $i$.

 Let $\mathfrak{g}$ be a  Lie algebra over $\mathbb{C}$. Let $\Cur (\mathfrak{g})=\cp \otimes \mathfrak{g}$ be the {\it current} Lie conformal algebra with $\lambda$-bracket $[f\otimes a \,_\lambda \, g\otimes b]:=f(-\lambda) g(\lambda + \p)\otimes [a,b]$.
 We may tacitly use the inclusion $\Cur(\g^*)\subseteq  (\Cur(\g))^{\c}$ via $\Theta$.

For a Lie coalgebra $(\fc,\delta_\fc)$  over $\mathbb{C}$, the {\it current differential Lie coalgebra} $\Cur (\mathfrak{c})=\cp \otimes \mathfrak{c}$ is  defined by extending the Lie coalgebra structure of $\mathfrak{c}$:
\begin{equation}\label{delt}
\delta_{\Cur(\fc)}\big(p(\partial)\otimes c\big)
= p(\partial\otimes 1+1\otimes \partial)\,\delta_\fc(c)
.
\end{equation}

\begin{proposition} \label{exq} Let $\fg$  be  a Lie algebra  over $\mathbb{C}$. Then $\Theta(\Cur(\mathfrak{g}^0)) \subseteq (\Cur(\mathfrak{g}))^0$  as differential Lie coalgebras.
\end{proposition}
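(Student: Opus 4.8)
The plan is to prove the inclusion by showing that $W:=\Theta(\Cur(\fg^0))$ is a \emph{good} $\cp$-submodule of $(\Cur(\fg))^\c$. Since $\Theta$ is an injective $\cp$-module homomorphism, $W$ is a $\cp$-submodule of $(\Cur(\fg))^\c$, and since $(\Cur(\fg))^0$ is the \emph{largest} good $\cp$-submodule, the inclusion $W\subseteq (\Cur(\fg))^0$ follows at once; I will then check that $\Theta$ intertwines the two coproducts, upgrading this to an inclusion of differential Lie coalgebras. The bridge between the classical and conformal pictures is the defining property of the classical upper zero $\fg^0\subseteq\fg^*$: for $\phi\in\fg^0$ with $\delta_{\fg^0}(\phi)=\sum\phi_{(1)}\otimes\phi_{(2)}$, the comultiplication dualizes the bracket, so that
\[
\phi([a,b])=\sum\phi_{(1)}(a)\,\phi_{(2)}(b)\qquad\text{for all }a,b\in\fg,
\]
and, crucially, $\phi_{(1)},\phi_{(2)}\in\fg^0$ (this is exactly the goodness of $\fg^0$ in Michaelis's sense). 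This last fact is what keeps the whole computation inside $W\otimes W$.

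First I would reduce the good-submodule condition $\pi_\mu(W)\subseteq\Phi_{-\mu}(W\otimes W)$ to generators $F=\partial^k\otimes\phi$ with $\phi\in\fg^0$, tested on decomposables $X=p(\partial)\otimes a$ and $Y=q(\partial)\otimes b$, which span everything over $\CC$. Using the current bracket $[X_\mu Y]=p(-\mu)\,q(\mu+\partial)\otimes[a,b]$ together with the formula (\ref{THETA}) for $\Theta$ — where conformal sesquilinearity amounts to substituting $\partial\mapsto\la$ in the argument — a direct computation gives
\[
[\pi_\mu(\Theta(F))]_\la(X\otimes Y)=p(-\mu)\,(-\la)^k\,q(\la+\mu)\,\phi([a,b]),
\]
and the displayed relation above rewrites $\phi([a,b])$ as $\sum\phi_{(1)}(a)\phi_{(2)}(b)$.

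The heart of the argument is to recognize this as $\Phi_{-\mu}$ applied to an element of $W\otimes W$. The two slots of $\Phi_{-\mu}$ specialize $\partial$ at $-\mu$ and at $\la+\mu$, so that
\[
[\Phi_{-\mu}(\Theta(\partial^j\otimes\phi_{(1)})\otimes\Theta(\partial^{k-j}\otimes\phi_{(2)}))]_\la(X\otimes Y)=\mu^j(-(\la+\mu))^{k-j}\,p(-\mu)\,q(\la+\mu)\,\phi_{(1)}(a)\phi_{(2)}(b),
\]
and the binomial identity $(-\la)^k=\sum_{j=0}^k\binom{k}{j}\mu^j(-(\la+\mu))^{k-j}$, coming from $-\la=\mu-(\la+\mu)$, yields
\[
\pi_\mu(\Theta(\partial^k\otimes\phi))=\Phi_{-\mu}\Big(\textstyle\sum_{j=0}^k\binom{k}{j}\sum\Theta(\partial^j\otimes\phi_{(1)})\otimes\Theta(\partial^{k-j}\otimes\phi_{(2)})\Big).
\]
Since $\phi_{(1)},\phi_{(2)}\in\fg^0$, every factor lies in $W$, so the argument is in $W\otimes W$; this proves goodness, hence $W\subseteq(\Cur(\fg))^0$. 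Finally, the induced coproduct on the good submodule $W$ is characterized by $\Phi_{-\mu}\circ\delta_W=\pi_\mu$, so the last display reads $\delta_W(\Theta(\partial^k\otimes\phi))=\sum_{j=0}^k\binom{k}{j}\sum\Theta(\partial^j\otimes\phi_{(1)})\otimes\Theta(\partial^{k-j}\otimes\phi_{(2)})$, which is precisely $(\Theta\otimes\Theta)$ applied to the Leibniz expansion of $(\partial\otimes1+1\otimes\partial)^k$ in the current-coalgebra coproduct (\ref{delt}). Thus $\Theta$ is a coalgebra homomorphism onto its image.

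I expect the main difficulty to be bookkeeping rather than conceptual: one must match the single power $(-\la)^k$ coming from $\partial^k$ in $F$ against the two distinct specializations $-\mu$ and $\la+\mu$ forced by $\Phi_{-\mu}$, and the binomial redistribution is exactly what makes this possible — mirrored, as it must be, by the Leibniz rule in (\ref{delt}). The only other point requiring care is the justification that evaluating $\Theta(F)_\la$ on $q(\mu+\partial)\otimes[a,b]$ reduces to the substitution $\partial\mapsto\la$, which is where the conformal sesquilinearity of $\Theta(F)$ enters.
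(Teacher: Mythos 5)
Your proposal is correct and follows essentially the same route as the paper: both reduce to generators $\partial^k\otimes\phi$ and decomposables $p(\partial)\otimes a$, $q(\partial)\otimes b$, invoke the classical goodness of $\fg^0$ via $\phi([a,b])=\sum\phi_{(1)}(a)\,\phi_{(2)}(b)$ with $\phi_{(1)},\phi_{(2)}\in\fg^0$, and match $(-\la)^k$ against the two specializations $\mu$ and $-(\la+\mu)$ through the same binomial identity, concluding goodness of $\Theta(\Cur(\fg^0))$ and compatibility of $\Theta$ with the current coproduct (\ref{delt}). The only cosmetic difference is order of presentation: the paper first defines $\delta_{\Theta V}=(\Theta\otimes\Theta)\circ\delta_V$ via the Leibniz expansion and then verifies $\Phi_{-\mu}\circ\delta_{\Theta V}=\pi_\mu$, whereas you first exhibit the explicit $\Phi_{-\mu}$-preimage to get goodness and then read off that the induced coproduct is the current one — the computation is identical.
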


\begin{proof} 

Recall that  $\fg^0\subset \g^*$ is the maximal good subspace of $\fg$ in the classical (non-conformal) sense, i.e., the transpose of the bracket $\pi^{\g}: \g^*\to (\g\otimes \g)^*$ satisfies $\pi^{\g}(\fg^0)\subset \Phi^\fg(\fg^0\otimes \fg^0)$; equivalently, $\fg^0$ carries a Lie coalgebra structure $\delta_{\fg^0}$ such that $\Phi^\fg \circ \delta_{\fg^0}=\pi^{\g}$.

Consider $V := \Cur(\fg^0) \subset \Cur(\g^*) \overset{\Theta}\hookrightarrow (\Cur(\g))^{\c}$. We need to prove that $\Theta V$ is a good $\cp$-submodule of $(\Cur(\g))^{\c}$. By (\ref{delt}),  $\delta_V$ is given by the current-coalgebra formula:
$$
\delta_V\big(\partial^k\otimes \phi\big)
= \big(\partial\otimes 1+1\otimes \partial\big)^k\;\delta_{\fg^0}(\phi)
= \sum \sum_{i=0}^k \binom{k}{i}\big(\partial^i\otimes \phi_{(1)}\big)\otimes \big(\partial^{k-i}\otimes \phi_{(2)}\big),
$$
and we define $\de_{\Theta V}(\Theta(f))=(\Theta\tt\Theta)\, \de_V(f)$ for any $f\in V$.

Let $a=p(\partial)\!\otimes x$, $b=q(\partial)\!\otimes y$ in $\Cur(\g)$. Using the bracket
$[a_\mu b]=p(-\mu)q(\lambda+\mu)\otimes [x,y]$ and the definition of $\Theta$, we compute for $f=\partial^k\otimes \phi\in V$:

$$
\underbrace{\big[\pi_{\mu}^{\!_{\Cur(\g)}}(\Theta f)\big]_\lambda(a\otimes b) 
}_{\text{= LHS}}
=(\Theta f)_\lambda\big([a_\mu b]\big)
= p(-\mu)\,q(\lambda+\mu)\,(-\lambda)^k\,\phi([x,y]).
$$

On the other hand,
$$
\underbrace{\big[\Phi_{-\mu}(\delta_{\Theta V}(\Theta (f))\big]_\lambda(a\otimes b)}_{\text{= RHS}}
= \sum \sum_{i=0}^k \binom{k}{i}
\big(\Theta(\partial^i\!\otimes \phi_{(1)})\big)_{-\mu}(a)\;
\big(\Theta(\partial^{k-i}\!\otimes \phi_{(2)})\big)_{\lambda+\mu}(b).
$$

By (\ref{THETA}), this equals
$$
p(-\mu)\,q(\lambda+\mu)\;
\sum_{i=0}^k \binom{k}{i}\mu^{i}\,(-\lambda-\mu)^{k-i}\;
\sum \phi_{(1)}(x)\phi_{(2)}(y).
$$

Since $\Phi^\fg \circ \delta_{\fg^0}=\pi^{\g}$, we have
$\sum\phi_{(1)}(x)\phi_{(2)}(y)=\phi([x,y])$. Using the binomial identity, 
we obtain RHS $=p(-\mu)q(\lambda+\mu)(-\lambda)^k\,\phi([x,y])=$ LHS. Thus
$\pi_\mu(\Theta V)\subseteq \Phi_{-\mu}(\Theta V\otimes \Theta V)$, proving that  $\Theta V$ is good. 
\end{proof}

\smallskip

Let $\fg$ be the $\CC$-vector space $\oplus_{i=0}^{\infty} \mathbb{C} e_{i}$ and define the bracket in $\fg$ by 
\[
\left[e_{j} , e_{k}\right]= \begin{cases}
\ \ e_{k-1} & j=0, k \geq 2 \\ 
-e_{j-1} & k=0, j \geq 2 \\
\ \ \ 0 & \mathrm{otherwise}
\end{cases}
\]
It is easy to see that $\fg$ is a Lie algebra.

\begin{proposition} 
(a) For any $1\neq a\in\cp$,  
\begin{equation*}
J_a:=\left(a(\p)\cp \tt e_0\right)\oplus \left( \oplus_{i=1}^{\infty} \cp\tt e_{i} \right)
\end{equation*}
is an ideal in $\Cur(\fg)$, and they are all the nontrivial ideals of cofinite rank in $\Cur(\fg)$.

  \medskip
  
\noindent
(b) $0 \neq \cp e_0^* = \loc((\Cur(\mathfrak{g}))^0)$.

  \medskip
  
\noindent
(c) $\oplus_{i=0}^{\infty} \,\mathbb{C} e_{i}^*$  is a good subspace of $\fg^*$, with the coproduct given by 
\begin{equation}\label{E}
\delta\left(e_{0}^*\right) = 0, \ \  \hbox{ and } \ \ 
\delta\left(e_{i}^*\right)=e_{0}^* \otimes e_{i+1}^*-e_{i+1}^* \otimes e_{0}^* 
\ \ \hbox{ for all } i\geq 1.
\end{equation}

  \medskip
  
\noindent
(d) $\Theta(\Cur(\oplus_{i=0}^{\infty} \,\mathbb{C} e_{i}^*))\subseteq (\Cur(\mathfrak{g}))^0 $.

  \medskip
  
\noindent
(e) $\fg$ is an example of a Lie algebra with $ 0 \neq \loc((\Cur(\mathfrak{g}))^0) \varsubsetneqq (\Cur(\mathfrak{g}))^0 $.

\end{proposition}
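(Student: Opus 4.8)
The plan is to prove parts (a)--(d) and then assemble (e); concretely, (a) classifies the cofinite-rank ideals of $\Cur(\fg)$, \thref{LOC} then yields (b), while (c)--(d) produce elements of $(\Cur(\fg))^0$ lying outside its locally finite part, and (e) compares these. For (a), the easy direction rests on one observation: each $[e_j,e_k]$ lies in $\mathrm{span}\{e_i : i\ge 1\}$ (the nonzero cases give $e_{k-1}$ with $k\ge2$ and $-e_{j-1}$ with $j\ge2$), so every $\la$-bracket of $\Cur(\fg)$ lands in $\fh:=\bigoplus_{i\ge1}\cp\tt e_i$; hence any $\cp$-submodule containing $\fh$ is automatically an ideal, in particular each $J_a$. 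For the converse I would introduce the $\cp$-linear operator $T$ on $\Cur(\fg)$ given by the $\la^0$-coefficient of $[(1\tt e_0)_\la\,\cdot\,]$, so that $T(p\tt e_k)=p\tt e_{k-1}$ for $k\ge2$ and $T(p\tt e_0)=T(p\tt e_1)=0$; since $I$ is an ideal, $T$ descends to $Q:=\Cur(\fg)/I$, where $\bar e_{k-1}=T\bar e_k$ for $k\ge 2$.

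The key step is to show $\fh\subseteq I$. Passing to $V:=Q\tt_{\cp}\CC(\p)$, which is finite dimensional precisely because $I$ has cofinite rank, the induced operator $\widetilde T$ is surjective on $U:=\mathrm{span}_{\CC(\p)}\{\bar e_k:k\ge1\}$ (every $\bar e_{k-1}$ is hit) yet locally nilpotent there ($T^{k}\bar e_k=0$), hence nilpotent on the finite-dimensional $U$; a nilpotent surjection is zero, so $U=0$. Thus each $\bar e_k$ $(k\ge1)$ is $\cp$-torsion in $Q$, hence central by $\mathrm{Tor}\subseteq\mathrm{Center}$ (\cite{DK}); but then $\bar e_k=[(\bar e_0)_\la\,\bar e_{k+1}]=0$, because $\bar e_{k+1}$ is central. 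Therefore $\fh\subseteq I$, and intersecting with $\cp\tt e_0$ gives $I=J_a$ with $a\cp=I\cap(\cp\tt e_0)$, where $a$ is non-invertible (and $a=0$ recovers $I=\fh$). This passage from torsion to centrality to vanishing is where I expect the main difficulty to lie.

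Given (a), part (b) is immediate from \thref{LOC} (note $\Cur(\fg)$ is free): every cofinite-rank ideal contains the smallest one $J_0=\fh$, so $\loc((\Cur(\fg))^0)=\{f\in(\Cur(\fg))^{\c}:f_\la(\fh)=0\}$; by conformal sesquilinearity such an $f$ is determined by $f_\la(1\tt e_0)$, and $\Theta$ identifies this set with $\cp\,e_0^*\neq0$. For (c) I would dualize the bracket: since $[x_\la y]$ has no $e_0$-component we get $\de(e_0^*)=0$, while $[e_0,e_{i+1}]=e_i=-[e_{i+1},e_0]$ yields exactly $\de(e_i^*)=e_0^*\tt e_{i+1}^*-e_{i+1}^*\tt e_0^*$, whose image lies in $(\bigoplus_j\CC e_j^*)^{\tt 2}$; this proves goodness, co-skew-symmetry and co-Jacobi being dual to skew-symmetry and Jacobi. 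Part (d) then follows from \prref{exq}: by (c) the subspace $\bigoplus_i\CC e_i^*$ is good, hence contained in $\fg^0$, so $\Theta(\Cur(\bigoplus_i\CC e_i^*))\subseteq\Theta(\Cur(\fg^0))\subseteq(\Cur(\fg))^0$.

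Finally (e) assembles the two computations: by (b) we have $0\neq\cp\,e_0^*=\loc((\Cur(\fg))^0)$, whereas by (d) the functional $\Theta(1\tt e_1^*)$ belongs to $(\Cur(\fg))^0$; since $[\Theta(1\tt e_1^*)]_\la(1\tt e_1)=1$ while every element of $\cp\,e_0^*$ annihilates $1\tt e_1$, we obtain $\Theta(1\tt e_1^*)\in(\Cur(\fg))^0\setminus\loc((\Cur(\fg))^0)$, yielding the strict inclusion $0\neq\loc((\Cur(\fg))^0)\varsubsetneqq(\Cur(\fg))^0$.
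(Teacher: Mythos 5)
Your proposal is correct, and in parts (b)--(e) it follows essentially the paper's route: (b) via \thref{LOC} applied to the classification in (a) (your reduction is slightly cleaner --- since every $J_a\supseteq J_0=\fh:=\oplus_{i\geq 1}\cp\otimes e_i$ and $\fh$ is itself a nontrivial cofinite-rank ideal, the condition collapses to annihilating $\fh$, whereas the paper runs the two cases $a\neq 0$ and $a=0$ separately); (c) is the same dualization of the bracket that the paper verifies case by case; (d) is the same monotonicity-plus-\prref{exq} argument; and in (e) you make explicit the witness $\Theta(1\otimes e_1^*)$ that the paper leaves implicit in ``follows from (b) and (d).'' The genuine divergence is the converse half of (a). The paper argues by hand: starting from $f=\sum_{i=0}^m a_i(\partial)e_i\in J$ with $a_m\neq 0$, it applies iterated $\lambda$-brackets with $e_0$ and extracts leading $\lambda_1$-coefficients to get $e_1,\dots,e_m\in J$, then uses cofiniteness to produce elements with unbounded top index, so $\fh\subseteq J$. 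You instead package the bracket with $e_0$ into the $\cp$-linear zero-mode operator $T$ (which indeed satisfies $T(p\otimes e_k)=p\otimes e_{k-1}$ for $k\geq 2$ and preserves any ideal), pass to the finite-dimensional $\CC(\partial)$-vector space $Q\otimes_{\cp}\CC(\partial)$, and observe that on $U=\mathrm{span}_{\CC(\partial)}\{\bar e_k: k\geq 1\}$ the induced map is both surjective and nilpotent, forcing $U=0$; hence each $\bar e_k$ $(k\geq 1)$ is torsion in $Q$, hence central by $\mathrm{Tor}\subseteq\mathrm{Center}$ (a fact from \cite{DK} that the paper itself invokes elsewhere), whence $\bar e_k=[(\bar e_0)_\lambda\,\bar e_{k+1}]=0$ and $\fh\subseteq J$; the identification $J=J_a$ with $a\cp=J\cap(\cp\otimes e_0)$ then agrees with the paper. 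Both arguments are sound: yours buys a structural, bookkeeping-free proof (no multi-variable $\lambda$-coefficient extraction) at the cost of localization and the torsion-to-center lemma, while the paper's is more elementary and self-contained.
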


\begin{proof}
(a) 
Let $J$ be a nontrivial ideal of cofinite rank in $\Cur(\fg)$ and take 
$f=\sum_{i=0}^{m} a_{i}(\d) e_{i} \in J$, with $a_m \neq 0$. 
Applying  $m-1$ brackets with $e_{0}$ on the right, we obtain 
\[
\left.\left.\left[\cdots\left[f_{\lambda_{1}} e_{0}\right]_{\lambda_{2}} e_{0}\right] \cdots \right]_{\lambda_{m-1}} e_{0}\right]=(-1)^{m-1} a_{m}\left(-\lambda_{1}\right) e_{1},
\]
and considering the coefficient in the maximal power in $\la_1$, we obtain $e_1\in J$. Similarly, applying  $m-2$ brackets with $e_{0}$ on the right, we obtain 
\[
\left.\left.\left[\cdots\left[f_{\lambda_{1}} e_{0}\right]_{\lambda_{2}} e_{0}\right] \cdots \right]_{\lambda_{m-2}} e_{0}\right]=(-1)^{m-2} \left[a_{m-1}\!\left(-\lambda_{1}\right) e_{1} + a_{m}\!\left(-\lambda_{1}\right) e_{2}\right],
\]
and considering the coefficient in the maximal power in $\la_1$ of $a_{m}\left(-\lambda_{1}\right)$, it follows that $c\,e_1+ b\, e_2\in J$ for some $c,b \in \CC$ with $b \neq 0$, hence $e_2\in J$. By induction, we have that $e_1, \ldots , e_m\in J$.

Since $J$ has cofinite rank, there exists a sequence $\{f_{m_{i}}=\sum_{i=0}^{m_{i}} a_{i}(\d) e_{i}\}_{i \geq 1}\subset J$, with $m_1<m_2<\cdots$ and $a_{m_{i}}\neq 0$. By the previous reasoning, we obtain that 
\[
J \supseteq \oplus_{i=1}^{\infty} \cp\tt e_{i}.
\]  
Since $J$ is a nontrivial $\cp$-submodule of cofinite rank in $\Cur(\fg)$, then there exists $1\neq a\in\cp$ such that $J=\left(a(\p)\cp \tt e_0\right)\oplus  \oplus_{i=1}^{\infty} \cp\tt e_{i} $, and it is immediate to check that it is an ideal.

(b) Using (a) and Theorem \ref{LOC}, we have that 
\[
\loc((\Cur(\mathfrak{g}))^0)=\{ f\in (\Cur(\mathfrak{g}))^\c \, : \, f_\la(J_a)=0 \,\hbox{ for some } 1\neq a\in\cp
\}.
\]
Let $f=\sum_{i\geq 0} p_i(\p) e_i^*\in (\Cur(\mathfrak{g}))^\c$ be such that $f_\la(J_a)=0 $ for some  $1\neq a\in\cp$. Then $f_\la(e_i)=p_i(-\la)=0$  for all $i\geq 1$, and $f_\la(a(\p) e_0)=a(-\la)p_0(-\la)=0$. Hence, $f=0$ if $a\neq 0$, which finishes the proof of (b).

(c) We should prove that 
$e_i^*([e_j,e_k])=\sum (e_i^*)_{(1)}(e_j)\, (e_i^*)_{(2)}(e_k)$
 for all $i,j,k\geq 0$. It follows by  
\[
e_0^*([e_j,e_k])=0=\sum (e_0^*)_{(1)}(e_j) \,(e_0^*)_{(2)}(e_k)
\quad
\hbox{ for all } j,k\geq 0,
\]
and for all $i\geq 1$
\begin{align*}
   e_i^*([e_j,e_k]) &= 0=\sum (e_i^*)_{(1)}(e_j)\, (e_i^*)_{(2)}(e_k)
\quad\hbox{ for all } j,k\geq 1, \\
  e_i^*([e_0,e_k]) & =\de_{i+1,k-1}=\sum (e_i^*)_{(1)}(e_0)\, (e_i^*)_{(2)}(e_k)
\quad\hbox{ for all } k\geq 1,\\
e_i^*([e_0,e_0]) & =0=\sum (e_i^*)_{(1)}(e_0)\, (e_i^*)_{(2)}(e_0)
.
 \end{align*}

(d) It follows from (c) and Proposition \ref{exq}.

(e) It follows from (b) and (d).
\end{proof}

In \cite{M2}, p.~343, Michaelis defined a family of Lie coalgebras for each $n\geq 1$, and the example with $n = 1$ coincides with the Lie coalgebra defined in (\ref{E}). See also \cite{M}, p.~9. 

\vskip 4mm


\noindent
{\bf Example: A Virasoro-type Lie conformal algebra.} Let \( L = \bigoplus_{i \geq 0} \mathbb{C}[\partial] L_i \) be a Lie conformal algebra 
with \(\lambda\)-bracket
\[
\wittbracket{L_i}{\lambda}{L_j} = (2\lambda + \partial)\,L_{i+j},
\] 
which is an infinite-rank Witt-type conformal algebra.  
Let \( W = \bigoplus_{i \in \mathbb{Z}} \mathbb{C}[\partial] L_i \) be the full Witt-type conformal algebra with 
\[
\wittbracket{L_i}{\lambda}{L_j} = (2\lambda + \partial)\,L_{i+j}.
\]
Observe that \( W \) is simple, and that \( L \subseteq W \) is a Lie conformal subalgebra. Hence \( \operatorname{Loc}(W^0) = 0 \), whereas \( W^0 \neq 0 \) (see Corollary~\ref{corol W}).

We define on generators, and extend by \(\mathbb{C}\)-linearity,  the sequence \(\{L_j^*\}_{j=0}^{\infty}\) of elements in \(L\conformaldual\) by
\[
(L_j^*)_\lambda(p(\partial) L_i) = \delta_{ij} \ p(\lambda) \quad \text{for all } i \in \mathbb{Z}_+, \ p\in \mathbb{C}[\partial].
\]

\medskip

Let \( V \subset L\conformaldual \) be the \( \mathbb{C}[\partial] \)-submodule defined as
\[
V := \bigoplus_{i \geq 0} \mathbb{C}[\partial] L_i^*.
\]

\begin{proposition}\label{prop:structure}
(a) \( V \) is a good \( \mathbb{C}[\partial] \)-submodule of \( L\conformaldual \).

\noindent (b) The differential Lie coalgebra structure on \( V \) is given on generators by
\[
\delta(L_k^*) = \sum_{\substack{r+s=k \\ r,s \geq 0}} 
\left( \partial L_r^* \otimes L_s^* - L_r^* \otimes \partial L_s^* \right) \in V \otimes V.
\]
\end{proposition}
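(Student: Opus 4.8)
The plan is to exhibit the candidate comultiplication directly and verify the single defining identity (\ref{phi-pi}) (equivalently (\ref{g-h})) on the generators $L_k^*$; both assertions (a) and (b) then follow simultaneously. I would define $\delta\colon V \to V \tt V$ on generators by the formula in (b) and extend it $\cp$-linearly; since each summand $\pa L_r^* \tt L_s^*$ and $L_r^* \tt \pa L_s^*$ manifestly lies in $V \tt V$, the inclusion $\delta(V) \subseteq V \tt V$ is automatic. It then suffices to check $\Phi_{-\mu} \circ \delta = \pi_\mu$ on the generators $L_k^*$, because $\pi_\mu$, $\Phi_{-\mu}$ and $\delta$ are all $\cp$-module homomorphisms (for $\pi_\mu$ this is because $[\pi_\mu(\pa f)]_\lambda = -\lambda[\pi_\mu(f)]_\lambda$, and for $\Phi_{-\mu}$ it is Proposition \ref{prop:phi}(b)); hence both sides are $\cp$-linear and agree as soon as they agree on generators.

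The core computation evaluates both sides on arbitrary $a = p(\pa) L_i$ and $b = q(\pa) L_j$. For the left-hand side, conformal sesquilinearity gives $[a_\mu b] = p(-\mu)\, q(\mu + \pa)\,(2\mu + \pa)\, L_{i+j}$, and applying $(L_k^*)_\lambda$ (which substitutes $\pa \mapsto \lambda$ and returns $\delta_{k,\,i+j}$) yields
\[
[\pi_\mu(L_k^*)]_\lambda(a \tt b) = \delta_{k,\,i+j}\; p(-\mu)\, q(\lambda+\mu)\,(\lambda + 2\mu).
\]
For the right-hand side I would expand $[\Phi_{-\mu}(\delta(L_k^*))]_\lambda(a \tt b)$ through (\ref{g-h}), using the dual action $(\pa f)_\lambda = -\lambda f_\lambda$ to rewrite $(\pa L_r^*)_{-\mu} = \mu (L_r^*)_{-\mu}$ and $(\pa L_s^*)_{\lambda+\mu} = -(\lambda+\mu)(L_s^*)_{\lambda+\mu}$. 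Only the index pair $(r,s) = (i,j)$ survives evaluation against $a$ and $b$, so the sum over $r+s = k$ collapses to
\[
\delta_{k,\,i+j}\; p(-\mu)\, q(\lambda+\mu)\,\big[\mu + (\lambda+\mu)\big],
\]
and the bracketed factor equals $\lambda + 2\mu$.

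Comparing the two displays shows $\Phi_{-\mu}(\delta(L_k^*)) = \pi_\mu(L_k^*)$ for every $k$, hence $\Phi_{-\mu} \circ \delta = \pi_\mu$ on all of $V$. In particular $\pi_\mu(V) = \Phi_{-\mu}(\delta(V)) \subseteq \Phi_{-\mu}(V \tt V)$, which is exactly the goodness of $V$, proving (a); and by the uniqueness built into (\ref{phi-pi}) together with the injectivity of $\Phi_{-\mu}$ from Proposition \ref{prop:phi}(c), the map $\delta$ we wrote down must coincide with the induced structure $\delta_V$, proving (b). I expect no conceptual obstacle: the whole content reduces to matching the coefficient $2\mu + \lambda$ coming from the Witt-type bracket against the coefficient $\mu + (\lambda + \mu)$ produced by the two terms of $\delta$, and the only place demanding care is bookkeeping the $\pa$-action signs on the conformal dual and keeping the spectral variables $\lambda$ and $\mu$ in their correct slots.
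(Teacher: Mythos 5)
Your proposal is correct and follows essentially the same route as the paper: both verify the single identity $\Phi_{-\mu}\circ\delta=\pi_\mu$ on the generators $L_k^*$, matching the coefficient $2\mu+\lambda$ from the bracket against $\mu+(\lambda+\mu)$ from the two terms of $\delta$, with goodness and the identification of $\delta_V$ (via injectivity of $\Phi_{-\mu}$) falling out together. Your only additions — carrying general coefficients $p(\partial), q(\partial)$ and spelling out why $\cp$-linearity reduces the check to generators — are points the paper leaves implicit, and your sign bookkeeping on the dual $\partial$-action is accurate throughout.
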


\begin{proof}
To prove this, we must show that
\[
\pi_\mu(L_k^*) = {\Phi}_{-\mu}(\delta(L_k^*)) \quad \text{for all } k \geq 0.
\]
Fix \(k \geq 0\). Then, for all \(i,j \geq 0\), we have:
\begin{align*}
\left[\pi_\mu(L_k^*)\right]_\lambda(L_i \otimes L_j) 
&= (L_k^*)_\lambda\left(\wittbracket{L_i}{\mu}{L_j}\right) \\
&= (L_k^*)_\lambda\left((2\mu + \partial)L_{i+j}\right) \\
&= (2\mu + \lambda) \delta_{k,i+j}.
\end{align*}
On the other hand:
\begin{align*}
\left[{\Phi}_{-\mu}(\delta(L_k^*))\right]_\lambda(L_i \otimes L_j) &= \left[ {\Phi}_{-\mu} \left(   \sum_{\substack{r+s=k ;\ r,s \geq 0}}
\left( \partial L_r^* \otimes L_s^* - L_r^* \otimes \partial L_s^* \right) \right) \right]_\lambda(L_i \otimes L_j) \\
&= \sum_{\substack{r+s=k \\ r,s \geq 0}} 
\biggl[ (\partial L_r^*)_{-\mu}(L_i)\  (L_s^*)_{\lambda + \mu}(L_j) - (L_r^*)_{-\mu}(L_i)\  (\partial L_s^*)_{\lambda + \mu}(L_j) \biggr] \\
&= \sum_{\substack{r+s=k \\  r,s \geq 0}} 
\left[ \mu \ \delta_{r,i}\, \delta_{s,j} + (\lambda + \mu)\  \delta_{r,i} \, \delta_{s,j} \right] 
= (2\mu + \lambda) \delta_{k,i+j}.
\end{align*}
This completes the proof.
\end{proof}

\begin{corollary}\label{cor:subalgebras}
(a) For any \(k \geq 0\), \( \bigoplus_{i=0}^k \mathbb{C}[\partial] L_i^* \) is a finite-rank subcoalgebra of \(V\).

\noindent
(b) \( V \subseteq \operatorname{Loc}(L^0) \).
\end{corollary}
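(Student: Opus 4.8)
The plan is to deduce both parts directly from the explicit coproduct formula in \prref{prop:structure}(b), exploiting the fact that the coproduct is ``lower-triangular'' with respect to the grading index.

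First I would treat part (a). Write $B_k := \bigoplus_{i=0}^k \mathbb{C}[\partial] L_i^*$; this is a free $\cp$-module of rank $k+1$, hence of finite rank. To see it is a subcoalgebra, recall that a subcoalgebra is a $\cp$-submodule $B$ with $\delta(B) \subseteq B \otimes B$. Since $\delta$ is a $\cp$-module homomorphism and $B_k$ is generated over $\cp$ by $L_0^*, \ldots, L_k^*$, it suffices to check the inclusion on these generators. For $m \leq k$, the formula
\[
\delta(L_m^*) = \sum_{\substack{r+s=m \\ r,s \geq 0}} \left( \partial L_r^* \otimes L_s^* - L_r^* \otimes \partial L_s^* \right)
\]
involves only indices $r,s$ with $r+s=m$, so $r,s \leq m \leq k$; thus every term lies in $B_k \otimes B_k$. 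Hence $\delta(B_k) \subseteq B_k \otimes B_k$, which proves (a).

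For part (b) I would combine (a) with the identification established in the proof of \thref{LOC}. By \prref{prop:structure}(a), $V$ is a good $\cp$-submodule of $L^{\c}$, and a good $\cp$-submodule of $L^{\c}$ is precisely a differential Lie subcoalgebra of $L^0$; in particular $V \subseteq L^0$. Now every element of $V = \bigoplus_{i \geq 0} \mathbb{C}[\partial] L_i^*$ lies in $B_k$ for $k$ large enough, and by part (a) each $B_k$ is a finite-rank subcoalgebra of $V$, hence of $L^0$. Therefore $V = \sum_{k \geq 0} B_k$ is a sum of finite-rank differential Lie subcoalgebras of $L^0$, and by the definition of $\operatorname{Loc}(L^0)$ as the sum of all such subcoalgebras we conclude $V \subseteq \operatorname{Loc}(L^0)$.

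The argument is essentially a bookkeeping exercise, so I do not expect a genuine obstacle; the one point that must be verified carefully is that the coproduct does not raise the grading index, i.e.\ that $\delta(L_m^*)$ produces only tensor factors $L_r^*$ with $r \leq m$. This lower-triangularity, visible immediately from the constraint $r+s=m$ with $r,s \geq 0$, is exactly what guarantees that the finite truncations $B_k$ are closed under $\delta$, and hence that $V$ is locally finite rather than merely good.
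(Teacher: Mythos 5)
Your proof is correct and is essentially the argument the paper intends: the corollary is left without an explicit proof precisely because, as you note, the constraint $r+s=k$ in the coproduct formula of Proposition~\ref{prop:structure}(b) makes each truncation $\bigoplus_{i=0}^k \mathbb{C}[\partial]L_i^*$ closed under $\delta$, and goodness of $V$ (equivalently, being a subcoalgebra of $L^0$, as recorded in the proof of Theorem~\ref{LOC}) then places $V$ inside $\operatorname{Loc}(L^0)$. No gaps.
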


\ 

For \(a   \in \mathbb{C}\backslash\{0\}\) and a fixed non-negative integer \(m\), we define elements in \(L\conformaldual\) on generators by:
\[
(f_{a,m})_\lambda(L_i) = a^i i^m \ \ \hbox{ for all } i\geq 0,
\]
that is,
\[
f_{a,m} = \sum_{i \geq 0} a^i i^m L_i^*.
\]
We define the \(\mathbb{C}[\partial]\)-submodule \(U\subset L\conformaldual\) as
\[
U := \bigoplus_{\substack{a \in \mathbb{C} \\ m \in \mathbb{Z}_{\geq 0}}} \mathbb{C}[\partial] \  f_{a,m}.
\]

\begin{definition}
An element \(f \in L\conformaldual\) is called a \emph{linearly conformal recursive sequence} if there exist \(N\in\mathbb{Z}_{\geq 0}\) and \(\beta_0,\ldots,\beta_r \in \mathbb{C}\), not all zero, such that
\begin{equation}\label{rs}
0 = \sum_{s=0}^r \beta_s \  f_\lambda(L_{m+s}) \quad \text{for all } m \geq N.
\end{equation}
\end{definition}

\begin{theorem}\label{thm:main}
(a) \(U\) is a good \(\mathbb{C}[\partial]\)-submodule of \(L\conformaldual\), 
where the differential Lie coalgebra structure on $U$ is given, on generators, by

\begin{equation}\label{bbbbbb}
\delta(f_{a,m}) = \sum_{j=0}^m \binom{m}{j} (\partial f_{a,j} \otimes f_{a,m-j}- f_{a,j} \otimes \partial f_{a,m-j}).
\end{equation}

\noindent
(b) \(\{ f_{a,m} : a \in \mathbb{C}, m \in \mathbb{Z}_{\geq 0} \} \cup \{ L_i^* : i \in \mathbb{Z}_{\geq 0} \}\) is a \(\mathbb{C}[\partial]\)-basis of the space of linearly conformal recursive sequences.

\noindent
(c) \( L^0 = V \oplus U = \) the space of linearly conformal recursive sequences.

\noindent

\noindent
(d) For each $a \in \mathbb{C}$, $\oplus_{j=0}^m \,\mathbb{C}[\partial] \,  f_{a,j}$ is a finite-rank Lie subcoalgebra of \(U\).

\noindent
(e) \( \operatorname{Loc}(L^0) = V \oplus U = L^0 \).
\end{theorem}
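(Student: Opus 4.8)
The plan is to treat the five parts in the order (a), (d), (b), (c), (e), since (c) rests on (a)--(b) while (e) is a corollary of (c)--(d). Part (a) is a direct binomial verification entirely parallel to Proposition \ref{prop:structure}: by Remark \ref{rem} it suffices to check, on the generators $f_{a,m}$, that the proposed coproduct (\ref{bbbbbb}) satisfies $\Phi_{-\mu}\circ\delta=\pi_\mu$, and then invoke injectivity of $\Phi$ (Proposition \ref{prop:phi}(c)) for uniqueness and for goodness of the whole $\cp$-span. Concretely I would compute $[\pi_\mu(f_{a,m})]_\lambda(L_i\otimes L_j)=(2\mu+\lambda)\,a^{i+j}(i+j)^m$ from $\wittbracket{L_i}{\mu}{L_j}=(2\mu+\partial)L_{i+j}$ and conformal sesquilinearity, then expand the right side of (\ref{bbbbbb}) by evaluations as in (\ref{THETA}), where the two $\partial$-terms contribute factors $\mu$ and $\lambda+\mu$; the identity $\sum_{j'}\binom{m}{j'}i^{j'}j^{m-j'}=(i+j)^m$ matches the two sides. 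Since every summand of (\ref{bbbbbb}) lies in $\cp f_{a,\bullet}\otimes\cp f_{a,\bullet}\subseteq U\otimes U$, $U$ is good; the same formula gives part (d), because $\delta(f_{a,m})$ only involves $f_{a,j'}$ with $0\le j'\le m$, so $\bigoplus_{j=0}^m\cp f_{a,j}$ is $\delta$-closed and of finite rank.

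For part (b) I would pass to the \emph{value sequence} $c_i(\lambda):=f_\lambda(L_i)\in\CC[\lambda]$ of $f\in L^\c$, under which $\partial$ acts as multiplication by $-\lambda$, $f_{a,m}$ becomes the constant-coefficient sequence $(a^ii^m)_i$, and $L_{i_0}^*$ becomes $(\delta_{i,i_0})_i$. The relation (\ref{rs}) with \emph{constant} $\beta_s$ says exactly that $(c_i(\lambda))_i$ is eventually annihilated by a fixed scalar recurrence. I would first show such a sequence has bounded $\lambda$-degree (the leading relation expresses $c_{i+r}$ as a scalar combination of earlier terms, so degrees cannot grow), reducing to finitely many scalar coefficient sequences $(c_i^{(e)})_i$, each eventually recursive. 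The classical structure theory of $C$-finite/eventually-recursive scalar sequences over $\CC$ (solutions spanned by $i^ma^i$, $a\neq0$, together with finitely supported sequences) then lets me reassemble $c_i(\lambda)=\sum_e\lambda^ec_i^{(e)}$ into a finite $\cp$-combination of the $f_{a,m}$ and $L_{i_0}^*$; $\cp$-independence of the proposed basis follows by fixing $\lambda$ and using independence of $\{i^ma^i\}\cup\{\delta_{i,i_0}\}$.

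Part (c) is where the real work lies. The inclusion $V\oplus U\subseteq L^0$ is formal: $V$ is good by Proposition \ref{prop:structure}, $U$ by (a), and sums of good submodules are good, so their (direct, by (b)) sum lies in the maximal good submodule $L^0$, and by (b) it is exactly the space of recursive sequences. The hard inclusion $L^0\subseteq V\oplus U$ asks that \emph{good} forces \emph{recursive with constant coefficients}. Writing $\delta_{L^0}(f)=\sum_{k=1}^n g_k\otimes h_k$ and using (\ref{g-h}) with $a=L_i,\ b=L_j$, goodness yields the polynomial identity
\[
(s+\mu)\,c_{i+j}(s-\mu)=\sum_{k=1}^n (g_k)_{-\mu}(L_i)\,(h_k)_{s}(L_j),\qquad s:=\lambda+\mu,
\]
for all $i,j\ge0$, in which the left side depends on $(i,j)$ only through $i+j$ while the factors on the right \emph{separate} the variables $\mu$ and $s$. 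Taking $j=0$ bounds the $s$-degree, hence the $\lambda$-degree of the $c_i$, so only finitely many coordinates $\omega_N^{(p,e)}$ (the $\mu^ps^e$-coefficient of the left side, $N=i+j$) occur. Extracting the $\mu^ps^e$-coefficient of the identity exhibits each scalar Hankel matrix $\big(\omega_{i+j}^{(p,e)}\big)_{i,j}$ as a product of an $(i,k)$- and a $(k,j)$-matrix, hence of rank $\le n$; Kronecker's theorem then gives each $(\omega_N^{(p,e)})_N$ a constant-coefficient linear recurrence. A common recurrence for the finitely many $(p,e)$, together with $c_N^{(e)}=\omega_N^{(0,e+1)}$ (from $\mu=0$), shows $(c_N(\lambda))_N$ satisfies a constant-coefficient recurrence, so $f$ is recursive and $f\in V\oplus U$ by (b). I expect this to be the main obstacle: the separation of $\mu$ and $\lambda+\mu$ is precisely what upgrades the naive finite-Hankel-rank statement (which over $\CC(\lambda)$ would only yield $\lambda$-dependent recurrences, e.g.\ the spurious $c_n=\lambda^n$) into genuine constant-coefficient recurrences, and carrying out the degree bound plus the Kronecker application is delicate.

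Finally, part (e) follows from (c) and (d): by (c) one has $L^0=V\oplus U$, and every element of $V\oplus U$ involves only finitely many $f_{a,m}$ and $L_i^*$, hence lies in a finite sum of the finite-rank subcoalgebras $\bigoplus_{i=0}^k\cp L_i^*$ (Corollary \ref{cor:subalgebras}(a)) and $\bigoplus_{j=0}^m\cp f_{a,j}$ (part (d)); a finite sum of subcoalgebras is again a finite-rank subcoalgebra, so $L^0$ is locally finite and $\loc(L^0)=L^0$.
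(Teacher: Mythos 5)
Your proposal is correct, and in parts (a), (b), (d), (e) it runs essentially along the paper's lines: (a) is the same binomial verification of $\Phi_{-\mu}\circ\delta=\pi_\mu$ on the generators $f_{a,m}$, (d) the same observation that (\ref{bbbbbb}) only involves $f_{a,j}$ with $j\le m$, (b) the same reduction to the scalar coefficient sequences $\{a_{i,k}\}_i$ and the classical basis $\{b^i i^\ell\}\cup\{e_j^*\}$ of $\mathbb{C}$-linearly recursive sequences (here you are in fact slightly more careful than the paper, which leaves implicit the uniform bound on $\deg_\lambda f_\lambda(L_i)$ that justifies ``finitely many nonzero coefficients''), and (e) is the same assembly from (c), (d) and Corollary~\ref{cor:subalgebras}. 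The genuine divergence is in the hard inclusion $L^0\subseteq V\oplus U$ of (c). The paper introduces the action $(f\dotla y)_\mu(x)=f_{\lambda+\mu}([x\,_{-\mu}\,y])$, shows that for $f\in L^0$ with $\delta(f)=\sum_{i=1}^n f_i\otimes g_i$ one has $f\dotla L_j\in M[\lambda]$ with $M=\mathbb{C}\text{-span}\{f_i\}$ finite-dimensional, deduces a dependence $\sum_i c_i(\lambda)(f\dotla L_i)=0$ over $\mathbb{C}[\lambda]$, and then — this is exactly the paper's counterpart of your ``separation of variables'' point — specializes $\lambda=x_0$ with some $c_i(x_0)\neq 0$: since the coefficients depend only on $\lambda$ while $f_{\lambda+\mu}(L_{m+i})$ depends only on the independent variable $\gamma=\lambda+\mu$, this immediately yields scalar $\beta_s$ and the recurrence (\ref{rs}). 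You instead extract $\mu^p s^e$-coefficients of the goodness identity $(s+\mu)\,c_{i+j}(s-\mu)=\sum_k (g_k)_{-\mu}(L_i)\,(h_k)_s(L_j)$, factor each Hankel matrix $\bigl(\omega_{i+j}^{(p,e)}\bigr)_{i,j}$ through rank $\le n$, and invoke Kronecker's theorem; this is sound (the $j=0$ specialization does bound $\deg c_i+1$ by $\max_k\deg_s (h_k)_s(L_0)$, uniformly in $i$, and eventual recurrences are all the definition requires, the finitely supported part being absorbed by the $L_i^*$). What each approach buys: the paper's route is shorter and needs no Hankel theory, only injectivity of $\Phi$ and a free-module rank count over $\mathbb{C}[\lambda]$; yours is more computational but produces explicit quantitative information (a uniform degree bound on the $f_\lambda(L_i)$, which is also the missing detail in (b), and recurrence order $\le n$ for each coefficient sequence). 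One simplification of your argument: since you only ever use the coefficients $\omega_N^{(0,e+1)}$, you may set $\mu=0$ at the outset, getting the scalar factorizations $[s^e]\bigl(s\,c_{i+j}(s)\bigr)=\sum_k (g_k)_0(L_i)\,[s^e](h_k)_s(L_j)$ directly, so the full two-variable bookkeeping is not needed.
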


\begin{proof}
(a) For any \(r,s \in \mathbb{Z}_{\geq 0}\):
\begin{align*}
\left[\pi_\mu(f_{a,m})\right]_\lambda (L_r \otimes L_s) 
&= (f_{a,m})_\lambda\left(\wittbracket{L_r}{\mu}{L_s}\right) 
= (f_{a,m})_\lambda((2\mu + \partial)L_{r+s}) 
= (2\mu + \lambda) a^{r+s} (r+s)^m.
\end{align*}
On the other hand:
\begin{align*}
&\left[\Phi_{-\mu}(\delta(f_{a,m}))\right]_\lambda(L_r \otimes L_s) \\
&= \sum_{j=0}^m \binom{m}{j} \biggl[ 
(\partial f_{a,j})_{-\mu}(L_r) \ (f_{a,m-j})_{\lambda + \mu}(L_s) 
 - (f_{a,j})_{-\mu}(L_r) \  (\partial f_{a,m-j})_{\lambda + \mu}(L_s) 
\biggr] \\
&= \sum_{j=0}^m \binom{m}{j} \left[ 
\mu (a^r r^j) (a^s s^{m-j}) + (\lambda + \mu) (a^r r^j) (a^s s^{m-j})
\right] \\
&= (2\mu + \lambda) a^{r+s} \sum_{j=0}^m \binom{m}{j} r^j s^{m-j} 
= (2\mu + \lambda) a^{r+s} (r+s)^m,
\end{align*}
which finishes the proof of (a).

\medskip\noindent
(b) Let $f=\sum_{j\geq 0} a_j(-\p) L_j^*$ be a linearly conformal recursive sequence. 
Then \(f_\lambda(L_i) = a_i(\lambda) = \sum_{k \geq 0} a_{i,k} \lambda^k\),  and  condition (\ref{rs}) becomes:
\[
0 = \sum_{k \geq 0} \left(\sum_{s=0}^r \beta_s\,  a_{m+s,k}\,\right) \lambda^k,  \qquad \forall m \geq N.
\]
Thus, for all \(k\in \mathbb{Z}_{\geq 0}\), the sequence \(\{a_{i,k}\}_{i \geq 0}\) satisfies
\[
0 = \sum_{i=0}^r d_i\, a_{m+i,k}, \qquad \forall m \geq N,
\]
proving it is a \(\mathbb{C}\)-linearly recursive sequence (see \cite{N} and \cite{T}).  
It is well known that 
the sequences \(\{b^i i^\ell\}_{i \geq 0}\) (\(b \in \mathbb{C}^*, \ell \in \mathbb{Z}_{\geq 0}\)) and \(e_j^* = \{\delta_{ij}\}_{i \geq 0}\) (\(j \geq 0\)) form a \(\mathbb{C}\)-basis for the space of \(\mathbb{C}\)-linearly recursive sequences. Thus,  for each \(k\in \mathbb{Z}_{\geq 0}\)
\[
a_{i,k} = \sum_{b,\ell} c_{b,\ell,k} \, b^i i^\ell + \sum_{\ell} d_{\ell,k} \, e_\ell^*(i) 
\]
with finitely many nonzero coefficients, and  \(e_\ell^*(i) = \delta_{i\ell}\). 
Then
\smallskip
\[
a_i(x) = \sum_{b,\ell,k} c_{b,\ell,k} (b^i i^\ell) x^k + \sum_{\ell,k} d_{\ell,k} e_\ell^*(i) x^k
\]
with finitely many nonzero coefficients. Therefore
\smallskip
\begin{align*}
f & = \sum_{i\geq 0} a_i(-\p) L_i^* = \sum_{i\geq 0} \sum_{b,\ell,k} c_{b,\ell,k} (b^i i^\ell) (-\p)^k  L_i^* + \sum_{i\geq 0} \sum_{\ell,k} d_{\ell,k} e_\ell^*(i) (-\p)^k L_i^* \\
    & = \sum_{b,\ell,k} c_{b,\ell,k} (-\p)^k f_{b,\ell} + 
\sum_{\ell,k} d_{\ell,k} (-\p)^k L_\ell^*
\end{align*}
with finitely many nonzero coefficients. Observe that \(\{ f_{a,m} : a \in \mathbb{C}, m \in \mathbb{Z}_{\geq 0} \} \cup \{ L_i^* : i \in \mathbb{Z}_{\geq 0} \}\) is  \(\mathbb{C}[\partial]\)-linearly independent.
This completes (b).

\medskip\noindent
(c) 
For \(f \in L\conformaldual\) and \(x,y \in L\), define
\[
(\,f \dotla y\,)_{\mu}(x) := f_{\lambda + \mu}([x\,_{-\mu}\, y]).
\]
Then \(f \dotla y \in \mathbb{C}[\lambda] \otimes L\conformaldual\), since
\[
(f\dotla y)_{\mu}(\partial x) = f_{\lambda + \mu}([\partial x\,_{-\mu}\, y]) = \mu (f\dotla y)_{\mu}(x).
\]

\noindent
For \(f \in L^0\), write \(\delta(f) = \sum_{i=1}^n f_i \otimes g_i\) with \(f_i, g_i \in L^0\). Then, using (\ref{phi-pi}), for \(x,y \in L\):
\begin{align*}
(f\dotla y)_{\mu}(x) 
&= f_{\lambda + \mu}([x\,_{-\mu}\, y]) 
= \left[\Phi_{\mu}(\delta(f))\right]_{\lambda + \mu}(x \otimes y) 
= \sum_{i=1}^n (f_i)_{\mu}(x) \, (g_i)_{\lambda}(y) \\
&= \left[ \sum_{i=1}^n [(g_i)_{\lambda}(y)] f_i \right]_{\mu}(x).
\end{align*}
Then \(f\dotla y = \sum_{i=1}^n ((g_{i})_\lambda (y)) f_i \in M[\lambda]\), where \(M = \mathbb{C}\text{-span}\{f_i \,|\, i=1,\ldots,n\}\) is finite-dimensional over \(\CC\). Hence \(\{f\dotla  L_j : j \geq 0\} \subseteq M[\lambda]\).

Since \( M[\lambda] \) is a free module of finite rank \( k=\operatorname{dim}_\mathbb{C}  M \) over the ring \( \mathbb{C}[\lambda] \), we conclude that the set \(\{f\dotla  L_j : j \geq 0\}\) must be linearly dependent over \( \mathbb{C}[\lambda] \). Therefore, there exist polynomials \(c_0,\ldots,c_r \in \mathbb{C}[\lambda]\), not all zero, such that
\[
\sum_{i=0}^r c_i(\lambda) (f\dotla  L_i) = 0.
\]
For any \(m \geq 0\):
\begin{align*}
0 &= \sum_{i=0}^r c_i(\lambda) (f\dotla  L_i)_\mu(L_m) 
= \sum_{i=0}^r c_i(\lambda) f_{\lambda + \mu}([L_m\,_{(-\mu)}\, L_i]) 
= \sum_{i=0}^r c_i(\lambda) f_{\lambda + \mu}((-2\mu + \partial) L_{m+i}) \\
&= \sum_{i=0}^r c_i(\lambda) (\lambda - \mu) f_{\lambda + \mu}(L_{m+i}).
\end{align*}
Thus,
\begin{equation}\label{lllll}
0 = \sum_{i=0}^r c_i(\lambda) f_{\lambda + \mu}(L_{m+i}) \quad \text{for all } m \geq 0.
\end{equation}
Set   \(\lambda + \mu = \gamma\), and fix \(x_0 \in \mathbb{C}\) such that \(c_i(x_0) \neq 0\) for some \(i\). Take \(\beta_s:=c_s(x_0)\) for all $s=0,\dots , r$. Then, from (\ref{lllll}) we obtain
\[
0 = \sum_{s=0}^r \beta_s \  f_\gamma(L_{m+s}) \quad \text{for all } m \geq 0,
\]
with $\be_0,\dots , \be_r\in \CC$ not all zero, proving that 
\( L^0 \subseteq  \) space of linearly conformal recursive sequences.
On the other hand, by (a), (b), and Proposition \ref{prop:structure}, we obtain \(L^0 \supseteq V \oplus U\), which finishes the proof of (c).

Part 
(d)  follows from (\ref{bbbbbb}). Part 
(e)  follows from (c), (d), and Corollary \ref{cor:subalgebras}(b).
\end{proof}
  
\ 

\begin{corollary}\label{corol W}
Let \( W = \bigoplus_{i \in \mathbb{Z}} \mathbb{C}[\partial] L_i \) be the full Witt-type conformal algebra. Then \[ \operatorname{Loc}(W^0) = 0  \hbox{\ \ \ and 
\ \ \ } W^0 \neq 0 ,\] 
since $0\neq L^0\subset W^0$.
\end{corollary}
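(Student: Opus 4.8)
The plan is to establish the two assertions separately: first $\loc(W^0)=0$, which is immediate from simplicity, and then $W^0\neq 0$, which I would obtain by exhibiting an explicit nonzero good functional on $W$.

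For the vanishing of $\loc(W^0)$: the full Witt-type conformal algebra $W$ is simple (as observed above) and has infinite rank as a $\cp$-module. Consequently the corollary immediately following \thref{LOC} applies verbatim and yields $\loc(W^0)=0$.

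For $W^0\neq 0$ I would mimic the computation of \thref{thm:main}(a), but with indices ranging over all of $\mathbb{Z}$. For $a\in\mathbb{C}^{*}$ and $m\in\mathbb{Z}_{\geq 0}$ define $F_{a,m}\in W^{\c}$ by the two-sided formula $(F_{a,m})_\lambda(L_i)=a^{i}i^{m}$ for all $i\in\mathbb{Z}$, and set $U':=\bigoplus_{a\in\mathbb{C}^{*},\,m\geq 0}\cp\,F_{a,m}$. The defining computation in \thref{thm:main}(a) used only the bracket $\wittbracket{L_r}{\mu}{L_s}=(2\mu+\partial)L_{r+s}$ together with the binomial identity $\sum_{j=0}^{m}\binom{m}{j}r^{j}s^{m-j}=(r+s)^{m}$, and the latter holds for all integers $r,s$, not merely for $r,s\geq 0$. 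Hence the same two-line calculation shows $\pi_\mu(F_{a,m})=\Phi_{-\mu}(\delta(F_{a,m}))$ with $\delta$ given by (\ref{bbbbbb}), so $U'$ is a good $\cp$-submodule of $W^{\c}$ and $0\neq U'\subseteq W^0$. In particular the single functional $F_{1,0}$, with $(F_{1,0})_\lambda(L_i)=1$ for every $i$, already satisfies $\delta(F_{1,0})=\partial F_{1,0}\otimes F_{1,0}-F_{1,0}\otimes\partial F_{1,0}$ and lies in $W^0\setminus\{0\}$, which suffices. This is the precise sense in which the recursive part of $L^0$ embeds into $W^0$: the coalgebra map $f_{a,m}\mapsto F_{a,m}$ carries $U\subset L^0$ isomorphically onto $U'\subseteq W^0$, both sides carrying the coproduct (\ref{bbbbbb}).

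The main obstacle is pinning down the precise meaning of the inclusion $L^0\subset W^0$, since extension of functionals by zero from $L$ to $W$ does \emph{not} preserve goodness. Indeed, for $f\in W^{\c}$ the transpose of the bracket is $[\pi_\mu(f)]_\lambda(L_i\otimes L_j)=(2\mu+\lambda)\,f_\lambda(L_{i+j})$, a Hankel expression in $(i,j)\in\mathbb{Z}\times\mathbb{Z}$; goodness forces this to be a finite sum $\sum_r(g_r)_{-\mu}(L_i)(h_r)_{\lambda+\mu}(L_j)$, i.e. forces the doubly infinite sequence $n\mapsto f_\lambda(L_n)$ to be linearly recursive. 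A two-sided linearly recursive sequence with nonzero characteristic roots cannot be eventually zero, so the finitely supported functionals $L_i^{*}\in V\subset L^0$ and the one-sided $f_{a,m}$ do not extend to good functionals on $W$. Thus only the recursive summand $U$ of $L^0=V\oplus U$ lifts, and it lifts not by zero-extension but by the two-sided continuation $f_{a,m}\mapsto F_{a,m}$. Verifying that this continuation stays inside $W^0$, equivalently that the binomial computation is insensitive to the sign of the indices, is the one genuinely non-formal step; once it is in hand, $W^0\neq 0$ follows, and together with $\loc(W^0)=0$ this completes the proof.
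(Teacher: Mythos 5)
Your proposal is correct, and on the second assertion it is in fact more complete than the paper's own argument. On $\operatorname{Loc}(W^0)=0$ you and the paper proceed identically: simplicity of $W$ plus the corollary following Theorem~\ref{LOC}. On $W^0\neq 0$, however, the paper offers only the bare assertion ``$0\neq L^0\subset W^0$'', without specifying the map realizing the inclusion; read literally as extension by zero it fails, for precisely the reason you identify: for $f\in W^{\c}$, membership in a good $\cp$-submodule forces a finite factorization of $(2\mu+\lambda)\,f_\lambda(L_{i+j})$ over $(i,j)\in\mathbb{Z}\times\mathbb{Z}$, hence forces the doubly infinite sequence $n\mapsto f_\lambda(L_n)$ to be two-sided linearly recursive, and a nonzero two-sided recursive sequence (after stripping zero extreme coefficients of the recurrence) cannot vanish on a half-line --- so neither the $L_i^*$ nor the zero-extended $f_{a,m}$ admit good extensions to $W$. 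Your repair --- replacing zero-extension by the two-sided continuation $F_{a,m}$ with $(F_{a,m})_\lambda(L_i)=a^i i^m$ for all $i\in\mathbb{Z}$, $a\neq 0$, and rerunning the computation of Theorem~\ref{thm:main}(a), which uses only the bracket and the binomial identity $\sum_{j=0}^m\binom{m}{j}r^js^{m-j}=(r+s)^m$, valid for all integers $r,s$ --- is exactly what is needed: it exhibits an explicit nonzero good $\cp$-submodule $U'\subseteq W^{\c}$ (already $F_{1,0}$ suffices), and it pins down the precise sense of the paper's inclusion: only the summand $U$ of $L^0=V\oplus U$ has a counterpart in $W^0$, via continuation rather than extension, with $f_{a,m}\mapsto F_{a,m}$ a coalgebra isomorphism onto $U'$ since both coproducts are given by (\ref{bbbbbb}). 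The paper's one-liner buys brevity; yours buys an actual proof together with a clarification of a claim that, taken at face value, is misleading. One small point: like the paper's own proof of Theorem~\ref{thm:main}(a), you verify goodness only on $\cp$-generators; this suffices because $\pi_\mu$ and $\Phi_{-\mu}$ intertwine the $\partial$-actions (Proposition~\ref{prop:phi}(a),(b)), and it is worth a line to say so explicitly.
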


\

\section{Jordan conformal  algebras and differential Jordan coalgebras}\lbb{Jordan}

\

In this section we present the definitions of Jordan conformal algebras and differential Jordan coalgebras. Then we extend the previous results and constructions of the locally finite part functor and the upper-zero functor to the Jordan case.

In \cite{KR}, p.~524, the notions of Jordan $H$-pseudosuperalgebra and Jordan conformal superalgebra were introduced. After some  computations, it was observed in \cite{BK} that if we write the Jordan identity for the  $H$-pseudosuperalgebra in the special case of $H=\CC[\p]$, one finds that the Jordan identity in the conformal case does not coincide with the Jordan identity presented in \cite{KR}; rather it requires some minor corrections that are presented in \cite{BK}. From these remarks, we obtain the following definitions.

\vskip .2cm 

\begin{definition} A {\it   Jordan conformal algebra} $R$ is  a left
 $\cp$-module endowed with a $\CC$-linear map,
\begin{displaymath}
R\otimes R  \longrightarrow \CC[\la]\otimes R, \qquad a\otimes b
\mapsto a_\la b
\end{displaymath}

\vskip .1cm

\noindent called the $\la$-product, which satisfies the following axioms
$(a,\, b,\, c\in R)$:

\

\noindent Conformal sesquilinearity: $ \qquad  \pa (a_\la b)=-\la
(a_\la b),\qquad a_\la \pa b=(\la+\pa) (a_\la b)$,

\vskip .3cm

\noindent Commutativity: $\ \qquad\qquad\qquad a_\la
b=  b_{-\la-\pa} \, a$,

\vskip .3cm

\noindent Jordan identity:
\begin{align*}
\  a_\la((b_\mu c)_\nu d) & 
+  \,  b_\mu((c_{\nu-\mu}a)_{\la-\mu} d)
+  \,  c_{\nu-\mu} ((a_{-\mu-\p}b)_{\la+\mu} d) \\
& = \,  (a_{-\mu-\p}b)_{\la+\mu}(c_{\nu-\mu}d)
+   \,  (b_\mu c)_\nu(a_{\la}d)
+   \,  (c_{\nu-\mu}a)_{\la+\nu-\mu}(b_\mu d).
\end{align*}
\end{definition}

A Jordan conformal algebra is called {\it finite} if it has finite
rank as a $\CC[\pa]$-module. The notions of homomorphisms, ideals,
and subalgebras of a Jordan conformal  algebra are defined in the
usual way. A Jordan conformal  algebra $R$ is {\it simple} if the $\la$-product is nontrivial  and has no nontrivial proper ideals.

\vskip .2cm

\begin{definition} A {\it differential Jordan  coalgebra} $(C,\Delta)$ is a $\CC[\partial]$-module $C$ endowed with a $\CC[\partial]$-homomorphism
\begin{displaymath}
\De:C\to C\tt C
\end{displaymath}
such that it is cocommutative:
\begin{equation*}
  \tau \,\De=\De,
\end{equation*}
\noindent and it satisfies the co-Jordan identity:
\vskip -.14cm
\begin{displaymath}
(1+\zeta+\zeta^2)(\De\otimes \Delta) \Delta = (1+\zeta+\zeta^2)(I\tt \De\tt I)(I\otimes \Delta) \Delta,
\end{displaymath}
\vskip .3cm

\noindent
where $\tau(a\otimes b )= \, b\otimes a$, and $\zeta(a\tt b\tt c\tt d)= \, b\tt c\tt a\tt d$.
\end{definition}

\noindent That is, the standard definition of a Jordan  coalgebra, with a compatible
$\CC[\partial]$-structure. A differential Jordan  coalgebra is called {\it finite} if it has finite
rank as a $\CC[\pa]$-module. The notions of homomorphisms and subcoalgebras of a differential Jordan  coalgebra are defined in the
usual way.  A differential Jordan  coalgebra $(C,\Delta)$ is {\it simple} if $\Delta\neq 0$ and contains no subcoalgebras other than $0$ and itself.

By a  simple computation and using the same arguments as in \cite{L} and in  Section 2, one shows that the construction given in Theorem \ref{prop:dual} also holds for  Jordan conformal algebras and  differential Jordan coalgebras. An analogous relation between  the contravariant functor ${ }^{0}: \mathcal{J}^{\scriptscriptstyle CA}_{free} \rightarrow \mathcal{J}^{\scriptscriptstyle DC}_{free}$ and  the contravariant functor ${ }^{\c}: \mathcal{J}^{\scriptscriptstyle DC}_{free} \rightarrow \mathcal{J}^{\scriptscriptstyle CA}_{free}$ in Theorem \ref{H} also holds for  free Jordan conformal algebras and free  differential Jordan coalgebras. Similarly, Theorem \ref{LOC} also holds for free Jordan conformal algebras.

\

\noindent{\bf Acknowledgment.} C. Boyallian and J. Liberati were
supported in part by grants from  Conicet and  Secyt-UNC  (Argentina).

\end{document}